\newtheorem{defi}{Definition}
\newtheorem{thm}{Theorem}
\newtheorem{question}{Question}
\newtheorem{prop}{Proposition}
\newtheorem{lemma}{Lemma}
\newtheorem{thmdefi}{Theorem-Definition}
\newtheorem{exmp}{Example}[section]
\renewcommand{\P}{\mathcal{P}}
\renewcommand{\L}{\mathcal{L}}
\newcommand{\B}{\mathcal{G}}
\DeclarePairedDelimiter\abs{\lvert}{\rvert}%
\newcommand*\colvec[1]{
        \global\colveccount#1
        \begin{pmatrix}
        \colvecnext
}
\def\colvecnext#1{
        #1
        \global\advance\colveccount-1
        \ifnum\colveccount>0
                \\
                \expandafter\colvecnext
        \else
                \end{pmatrix}
        \fi
}          
\newcommand*\bigcdot{\mathpalette\bigcdot@{1}}
\newcommand*\bigcdot@[2]{\mathbin{\vcenter{\hbox{\scalebox{#2}{$\m@th#1\bullet$}}}}}
\title{Combinatorial analogs of topological zeta functions}
\author{Robin van der Veer}
\address{KU Leuven, Department of Mathematics, Celestijnenlaan 200B, Leuven 3001, Belgium}
\email{robin.vanderveer@kuleuven.be}
\begin{document}

\begin{abstract}
In this article we introduce a new matroid invariant, a combinatorial analog of the topological zeta function of a polynomial. More specifically we associate to any ranked, atomic meet-semilattice $\L$ a rational function $Z_\L(s)$, in such a way that when $\L$ is the lattice of flats of a complex hyperplane arrangement we recover the usual topological zeta function. The definition is in terms of a choice of a combinatorial analog of resolutions of singularities, and the main result is that $Z_\L(s)$ does not depend on this choice and depends only on $\L$. Known properties of the topological zeta function provide a source of potential $\mathbb{C}$-realisability test for matroids.
\end{abstract}
\maketitle
\section{Introduction}
\label{section:1}
Recall that a finite meet-semilattice is a finite poset in which the meet (greatest lower bound) of every subset exists. The main example of a finite meet-semilattice that we have in mind is the lattice of flats of a matroid.
In \cite{2003icr}, Feichtner and Kozlov define a notion of combinatorial blowup of an element in a meet-semilattice. The result of this operation is a new meet-semilattice. They show that if we perform specific sequences of blowups we eventually obtain a simplicial poset, i.e. one in which every interval is boolean. More precisely, to obtain a simplicial poset we need to blowup the elements in a building set, see Definition \ref{defi:BS}. Their construction is inspired by the construction of wonderful models for hyperplane arrangements by de Concini and Procesi. More specifically, the intersection posets of the local hyperplane arrangements obtained in the construction of the wonderful model coincide with the posets obtained by performing combinatorial blowups on these posets.

In this article we use these constructions to define for any finite, ranked, atomic lattice $\L$ a rational function $Z_\L\in\mathbb{Q}(s)$. If we take $\L$ the lattice of flats of a hyperplane arrangement, then $Z_\L$ is the usual topological zeta function of the arrangement. The topological zeta function is a singularity invariant associated to a polynomial $f\in\mathbb{C}[x_1,\ldots,x_n]$ and it was defined by Denef and Loeser in \cite{DL}. The well known monodromy conjecture relates the poles of the topological zeta function of $f$ with the eigenvalues of the monodromy action on the cohomology of the Milnor fibers of $f$. This conjecture has been proven  for hyperplane arrangements by Budur, Musta\c{t}\u{a}, and Teitler in \cite{MCHPA}. 

Our combinatorial zeta function is defined in terms of building sets as:
\begin{defi}
Let $\L$ be a finite, ranked, atomic lattice and let $\B \subset \L$ be a building set. We define
 $$Z_{\L}^{\B}(s) = \sum_{U\in N(\B)}\chi^{\circ}(U)\prod_{A\in U}\frac{1}{n_As+k_A}$$
 where $n_A$ is the number of atoms less then or equal to $A$, $k_A$ is the rank of $A$, and 
 $$\chi^{\circ}(U)=\sum_{S\supset U}(-1)^{|S|-|U|}rk\ D(\L,\B,S).$$
\end{defi}
Here $D(\L,\B,S)$ is a $\mathbb{Z}$-algebra constructed from $\L,\B$ and $S$, and $N(\B)$ is the complex of nested sets relative to $\B$, see Definition \ref{def:zeta}. Our main theorem is then the following:
\begin{thm}
$Z_\L^\B$ is independent of the choice of building set $\B$.
\end{thm}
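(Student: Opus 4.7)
The plan is to reduce the independence question to invariance under an elementary change of building set, and then to handle that elementary case via a direct manipulation of the defining sum.

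First, I would use the fact (implicit in Feichtner--Kozlov) that any two building sets in $\L$ can be joined by a chain in which successive terms differ by a single element: there is a minimal building set, consisting of the irreducible elements of $\L$, that is contained in every other building set, and any building set can be enlarged one element at a time so long as the resulting collection still satisfies the building set axioms, up to a maximal building set. Consequently, it suffices to fix a building set $\B$, pick $G \in \L$ for which $\B' = \B \cup \{G\}$ is again a building set, and prove that $Z_\L^\B = Z_\L^{\B'}$.

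Second, to compare $Z_\L^\B$ and $Z_\L^{\B'}$, I would partition $N(\B')$ according to whether a nested set contains $G$ or not. The nested sets in $N(\B')$ not containing $G$ may or may not already lie in $N(\B)$---the discrepancy is exactly controlled by the new nestedness conditions imposed by $G$---while the nested sets containing $G$ all carry the additional factor $\frac{1}{n_G s + k_G}$. The goal is then to show that the contribution of the first class reassembles into $Z_\L^\B$ and the contribution of the second class vanishes. Both statements require comparing $\chi^\circ(U)$ computed inside $(\L,\B)$ with the value of the analogous expression inside $(\L,\B')$, which in turn rests on understanding how $\rk D(\L,\B,S)$ transforms when one enlarges $\B$ to $\B'$.

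The main obstacle will be this last point: producing an algebraic comparison between $D(\L,\B,S)$ and $D(\L,\B',S)$ clean enough for the needed rank identities to hold and for the inclusion--exclusion cancellation defining $\chi^\circ$ to go through. I expect this to take the form of a short exact sequence, or a filtration whose graded pieces can be described in terms of localizations of $\L$ at $G$ (for instance the lattice below $G$ and the star of $G$), mirroring the classical fact that the topological zeta function is invariant under blowing up a smooth center transverse to the discriminant. Once that ingredient is in place, the remainder should reduce to routine bookkeeping on the poset of nested sets; as a sanity check it would be worth verifying on small cases, such as $\L$ boolean and the lattice of flats of the braid arrangement, that signs and normalizations work out.
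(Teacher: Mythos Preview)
Your first reduction---connecting any two building sets by a chain of single-element insertions/deletions through the minimal building set---is exactly what the paper does (its Lemma on chains of building sets). The divergence is in the elementary step.

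Your decomposition of $N(\B')$ is miscalibrated. By the comparison lemma for nested sets (Lemma~\ref{Lemma:nesteds}), every $S\in N(\B')$ with $G\notin S$ already lies in $N(\B)$; the discrepancy goes the other way: the $\B$-nested sets that \emph{disappear} in $N(\B')$ are precisely those containing $F_\B(G)$. So your ``first class'' does not reassemble into $Z_\L^\B$---it is missing exactly the terms indexed by $\{S\in N(\B):F_\B(G)\subset S\}$---and the ``second class'' of $G$-containing nested sets does not vanish; rather, it supplies replacements for those missing terms, weighted by the new factor $\alpha(G)^{-1}$. The cancellation is therefore not a clean split but a genuine exchange, and your outline does not yet contain the mechanism that makes it work. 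Moreover, even for the $G$-free nested sets that persist, the values $\chi^\circ(S)$ change between $\B$ and $\B'$, because $\chi^\circ$ is an alternating sum over \emph{all} supersets in the relevant nested set complex, and those supersets differ.

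The paper avoids this bookkeeping by decoupling the problem into two independent pieces. First (Theorem~\ref{Z}), it proves a purely formal statement: for \emph{any} function $\chi:N(\B)\to\mathbb Z$ and its formal blowup $Bl_b\chi:N(\B')\to\mathbb Z$ (Definition~\ref{def:NF}), and any $\alpha$ with $\alpha(G)=\sum_{F\in F_\B(G)}\alpha(F)$, one has $Z_\L^{\B,\chi,\alpha}=Z_\L^{\B',Bl_b\chi,\alpha}$. The proof does not partition $N(\B')$ at all; instead it rewrites $Z$ via the closed form of Lemma~\ref{lemma:zeta}, uses linearity in $\chi$ to reduce to indicator functions $\delta_S$, and checks a polynomial identity in the variables $\alpha(A)$. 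Second (Theorem~\ref{Nf}), it shows that the specific function $\chi^{arr}_\B(S)=\rk D(\L,\B,S)$ satisfies $\chi^{arr}_{\B'}=Bl_b\chi^{arr}_\B$, i.e.\ is an $N$-function. This is done not with short exact sequences but with an explicit monomial basis for $D(\L,\B,S)$ (Proposition~\ref{basis}) and several combinatorial identities among the exponents $d^S_{H_{<A},A}$ (Lemma~\ref{dprop1}). The two pieces together give the theorem. The modularity is the point: the invariance of $Z$ under blowup holds for reasons having nothing to do with the algebras $D$, and the $N$-function property of $\chi^{arr}$ holds for reasons having nothing to do with the shape of $Z$.
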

See Definition \ref{def:zeta} and Theorem \ref{Z} for the general statements. 

It follows immediately from the definition that the poles of $Z_\L(s)$ are contained in the set of {\it candidate poles} $\{{-k_A}/{n_A}\}_A$, where $A$ are the elements of the minimal building set.  The reason for this terminology is that some of these candidate poles may cancel in the final expression and hence will not be actual poles. In the context of the topological zeta function understanding this cancellation phenomenon is the most important unsolved problem, since according to the monodromy conjecture only actual poles give us topological information about the singularities of $f$. We remark that although the monodromy conjecture has been proven for hyperplane arrangements, pole cancellation is not understood even in this case: the proof shows that all candidate poles are eigenvalues of the monodromy, but sheds no light on which of these candidate poles are actual poles. It is a folklore conjecture that the eigenvalues of the monodromy at the origin of a central hyperplane arrangement are combinatorial invariants, but there is no conjectured formula.

\subsection{Notation and conventions}
Throughout this article all posets are assumed to be finite. Let $\L$ be a poset. We denote, for $S\subset \L, x\in \L$: $S_{<x}=\{y\in \L\mid y<x\}$,
and similarly for the other order relations. We recall that the \textit{join} $\bigvee S$ of a subset $S\subset \L$ is the least upper bound of $S$, and the \textit{meet} $\bigwedge S$ is the greatest lower bound.
$\L$ is called a \textit{meet-semilattice} if $\bigwedge S$ exists for all $S\subset \L$. $\L$ is called a \textit{lattice} if both $\bigwedge S$ and $\bigvee S$ exist for all $S\subset \L$. In both cases $\L$ has in particular a least element, denoted $\hat 0$.
We write $x\lessdot y$ if $x<y$ and there does not exist a $z$ such that $x<z<y$. We write $A(\L)=\{x\in \L\mid \hat 0 \lessdot x\}$ for the set of atoms of $\L$. 
A subset $C\subset \L$ is called a \textit{chain} if any two elements of $C$ are comparable. $\L$ is \textit{ranked} if for all $x\in \L$, the cardinalities of any two maximal chains $\hat 0<\dots<x$ are equal.
In this case $rk_\L(x)$ denotes this cardinality. We drop the $\L$ from the notation if it is clear from the context. $\L$ is called \textit{atomic} if every element in $\L$ is the join of some set of atoms of $\L$.
For $a,b\in\L$ we denote $[a,b]=\{x\in\L\mid a\leq x\leq b\}$ as usual. Given two posets $\L_1,\L_2$, the product poset $\L_1\times\L_2$ has as underlying set the cartesian product of the underlying sets of $\L_1$ and $\L_2$, and the order is defined by setting $(a_1,a_2)<(b_1,b_2)$ if and only if $a_1<b_1$ in $\L_1$ and $a_2<b_2$ in $\L_2$. An isomorphism of posets is a bijection that preserves the order relation in both directions. The cardinality of a set $X$ is denoted $\abs{X}$.
\subsection{Acknowledgement} This research was partially supported by Nero Budur's OT, FWO, and Methusalem grants. We would like to thank Nero Budur for the many useful discussions.

\section{Combinatorial resolutions}
We recall the definition of combinatorial blowups and resolutions. Throughout $\L$ is a finite meet-semilattice. For proofs and more information we refer to \cite{Feichtner2004}.
\begin{defi}
For $p\in \L$ the combinatorial blowup $Bl_p\L$ is the meet-semilattice with underlying set:
\begin{enumerate}
\item $x\in \L$ such that $x\not\geq p$, and
\item $[p,x]$, $x\in \L$, such that $x\not\geq p$ and $x\vee p$ exists.
\end{enumerate}
The order relation is:
\begin{enumerate}
\item $x>y$ in $Bl_p\L$ if and only if $x>y$ in $\L$, 
\item $[p,x]>[p,y]$ in $Bl_p\L$ if and only if $x>y$ in $\L$,
\item $[p,x]>y$ in $Bl_p\L$ is and only if $x\geq y$ in $\L$.
\end{enumerate}
\end{defi}
The idea is that $\L$ is the intersection poset of some local hyperplane arrangement (i.e. a finite union of hypersurfaces, locally isomorphic to a hyperplane arrangement), and $Bl_p\L$ is the intersection poset after we blow up the stratum $p$. Under this interpretation $x\in Bl_p\L\cap \L$ is the strict transform of some stratum not contained in $p$, and $[p,x]\in Bl_p\L$ is the intersection of such a strict transform with the exceptional divisor. In particular $[p,\hat 0]$ is the exceptional divisor. Blowing up special sets of strata results in a normal crossings model for the local hyperplane arrangement, also known as the wonderful model, see \cite{CPWM}. In purely combinatorial terms these special sets are building sets:
\begin{defi}
\label{defi:BS}
 Let $\mathcal{L}$ be a meet-semilattice. A subset $\B\subset \mathcal{L}\setminus\{\hat 0\}$ is called a building set for $\mathcal{L}$ if for all $p\in \mathcal{L}\setminus \{\hat 0\}$ there is an isomorphism of posets
 $$\phi_{\B}^p:[\hat 0, p] \to \prod_{b\in \max \B_{\leq p}}[\hat 0, b]$$
 with 
 $$\phi_{\B}^p(b)=(\hat 0,\dots, b, \dots, \hat 0)$$
 for all $b\in \max \B_{\leq p}$. 
 We will denote 
 $$F_{\B}(p)=\max \B_{\leq p}.$$
\end{defi}
We then have the following Theorem-definition:
\begin{thmdefi}
Let $\B\subset\L$ be a building set. choose a linear refinement of the reverse order on $\B$: $b_0>\dots> b_n$, $n=\abs{\B}$. Then
$$Bl_{b_n}\dots Bl_{b_0}\L\cong N(\B)$$
where $N(\B)$ is the simplicial poset consisting of all subsets $S\subset \B$ such that for all subsets $T\subset S$ with $\abs{T}>1$ and pairwise incomparable elements, $\bigvee T$ exists and is not contained in $\B$.
\end{thmdefi}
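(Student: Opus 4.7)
I would proceed by induction on $|\B|=n+1$. The base case $|\B|=0$ forces $\L=\{\hat 0\}$ and the statement is trivial. For the inductive step, one singles out the first blowup $Bl_{b_0}$ and reduces to a statement about the smaller building set $\B\setminus\{b_0\}$ inside $Bl_{b_0}\L$. Note that in the linear refinement $b_0>\cdots>b_n$, $b_0$ is automatically a maximal element of $\B$, which is what matters for the reduction.

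The heart of the argument is a \emph{stability lemma}: if $\B\subset\L$ is a building set and $b_0$ is a maximal element of $\B$, then setting
$$\B':=(\B\setminus\{b_0\})\cup\{[b_0,\hat 0]\}\subset Bl_{b_0}\L$$
(with every $b\in\B$ satisfying $b<b_0$ identified with its blowup image $[b_0,b]$, and every $b$ incomparable to $b_0$ kept as is), $\B'$ is a building set for $Bl_{b_0}\L$. To prove this I would construct $\phi_{\B'}^p$ for every $p\in Bl_{b_0}\L\setminus\{\hat 0\}$, split into cases by the type of $p$: if $p\in\L$ with $p\not\geq b_0$, the factorisation $[\hat 0,p]\cong\prod_{b\in F_{\B'}(p)}[\hat 0,b]$ is inherited from $\phi_\B^p$ after observing $b_0\notin F_\B(p)$; if $p=[b_0,x]$ with $x\vee b_0$ existing, one combines the building-set factorisation at $x\vee b_0$ in $\L$ with the order rules defining $Bl_{b_0}\L$ to isolate the factor $[\hat 0,[b_0,\hat 0]]$ corresponding to the exceptional element.

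Next, I would establish a bijection $N(\B)\to N(\B')$ of nested set complexes by the rule $S\mapsto S$ if $b_0\notin S$ and $S\mapsto(S\setminus\{b_0\})\cup\{[b_0,\hat 0]\}$ if $b_0\in S$. Checking that nestedness is preserved amounts to verifying, for each pairwise incomparable subset $T\subset S$ with $|T|>1$, that the join $\bigvee T$ exists in $\L$ and lies outside $\B$ if and only if the corresponding join exists in $Bl_{b_0}\L$ and lies outside $\B'$. This is a direct case analysis based on whether $b_0\in T$ and on the relative position of the elements of $T$ to $b_0$, using the explicit order relations in $Bl_{b_0}\L$.

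With both ingredients in place, the induction hypothesis applied to $\B'$ (ordered by $b_n>\cdots>b_1$, a linear refinement of the reverse order on $\B'$) yields
$$Bl_{b_n}\cdots Bl_{b_1}(Bl_{b_0}\L)\cong N(\B'),$$
and composing with the bijection $N(\B')\cong N(\B)$ closes the induction. The main obstacle is the stability lemma, specifically verifying the factorisation $\phi_{\B'}^p$ at the type-(2) elements $p=[b_0,x]$, where one must separate the exceptional factor $[\hat 0,[b_0,\hat 0]]$ from the remaining factors while keeping track of the identifications made between $\B\setminus\{b_0\}$ and its image in $Bl_{b_0}\L$.
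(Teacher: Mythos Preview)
The paper does not prove this statement; it is recalled from \cite{Feichtner2004} (Feichtner--Kozlov), so there is no in-paper proof to compare against. Your overall architecture---a stability lemma showing that $(\B\setminus\{b_0\})\cup\{[b_0,\hat 0]\}$ is a building set in $Bl_{b_0}\L$, together with an identification $N(\B)\cong N(\B')$---is indeed the Feichtner--Kozlov strategy.

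However, your induction does not terminate as written. You induct on $|\B|$, but $\B'=(\B\setminus\{b_0\})\cup\{[b_0,\hat 0]\}$ has the \emph{same} cardinality as $\B$: the maximal element $b_0$ is traded for the new atom $[b_0,\hat 0]$. You then apply the ``induction hypothesis'' to $\B'$ using only the $n$ blowups $Bl_{b_n}\cdots Bl_{b_1}$, but $\B'$ has $n+1$ elements, so you are silently omitting the blowup at $[b_0,\hat 0]$; this is not an instance of the statement being proved. The correct induction variable is the number of \emph{non-atoms} in $\B$: since $b_0$ is maximal it is a non-atom (unless $\B=A(\L)$), and it is replaced by the atom $[b_0,\hat 0]$, so this count drops by one. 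The base case is then $\B=A(\L)$, where the building-set factorisations force every interval $[\hat 0,p]$ to be boolean, so $\L$ is already simplicial and one checks directly $\L\cong N(A(\L))$; the remaining blowups at atoms are isomorphisms on a simplicial poset. (You also need, to close the induction, that the result is independent of which linear refinement you take, since the refinement on $\B'$ placing $[b_0,\hat 0]$ last is not the one inherited from $b_1,\dots,b_n$.)

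A second, smaller issue: your parenthetical ``every $b<b_0$ identified with its blowup image $[b_0,b]$'' is incorrect. For $b<b_0$ both $b$ and $[b_0,b]$ are distinct elements of $Bl_{b_0}\L$, and the building set $\B'$ contains $b$ itself, not $[b_0,b]$. No identification is needed: every $b\in\B\setminus\{b_0\}$ survives in $Bl_{b_0}\L$ because $b\not\geq b_0$ by maximality of $b_0$.
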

We remark that it is not a priori clear that the expression $Bl_{b_n}\dots Bl_{b_0}\L$ makes sense, since in principle it could happen that for some $i$, $b_i\not\in Bl_{b_{i-1}}\dots Bl_{b_0}\L$. The fact that $\B$ is a building set and that we perform the blowups in a linear refinement of the reverse order ensures that this does not happen. We also remark that $N(\B)$ is clearly an abstract simplicial complex.

\section{Building set extensions}
We will need two results on building sets and nested sets. The first relates $N(\B)$ with $N(\B\cup\{b\})$ where $\B$ and $\B\cup\{b\}$ are building sets for $\L$:
\begin{lemma}
\label{Lemma:nesteds}
 Let $\B_1$ be a building set for $\L$, and $b\in \L\setminus \B_1$ be such that $\B_2=\B_1\cup\{b\}$ is also a building set. Then
 $S\subset \B_2$ is an element of $N(\B_2)$ if and only if 
 \begin{enumerate}
  \item $F_{\B_1}(b)\not\subset S$ and,
  \item $S\setminus\{b\}\in N(\B_1)$ and,
  \item  if $b\in S$ then $S\setminus \{b\}\cup F_{\B_1}(b)\in N(\B_1)$.
 \end{enumerate}
More concisely:
 $$N(\B_2)=Bl_{F_{\B_1}(b)}N(\B_1).$$
\end{lemma}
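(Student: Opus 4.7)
The plan is to prove both implications of the stated equivalence between $S\in N(\B_2)$ and conditions (1)--(3), then to derive the concise blowup formulation. The main tool is the building-set isomorphism $\phi_{\B_1}^b:[\hat 0,b]\to\prod_{a\in F_{\B_1}(b)}[\hat 0,a]$, together with its analog for $\B_2$. Two structural consequences will be used throughout: first, since $b\notin\B_1$ the factorization is nontrivial and $\abs{F_{\B_1}(b)}\geq 2$ (else $\phi_{\B_1}^b$ would send $b$ to itself, forcing $b\in\B_1$); second, every $c\in(\B_1)_{\leq b}$ sits below a \emph{unique} $a^*\in F_{\B_1}(b)$, because otherwise $c$ lies below the meet of two incomparable maximal factors, which in the product is $\hat 0$.

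For the forward direction assume $S\in N(\B_2)$: condition (1) holds because $F_{\B_1}(b)$ is pairwise incomparable of size $\geq 2$ with $\bigvee F_{\B_1}(b)=b\in\B_2$, so $F_{\B_1}(b)\subset S$ would violate nestedness. Condition (2) is immediate from $\B_1\subset\B_2$. Condition (3) is the main work: given $b\in S$ and a pairwise incomparable $T'\subset S':=(S\setminus\{b\})\cup F_{\B_1}(b)$ of size $\geq 2$, split $T'=T_O\sqcup T_F$ with $T_O=T'\setminus F_{\B_1}(b)\subset S\setminus\{b\}$ and $T_F=T'\cap F_{\B_1}(b)$. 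The cases $T_F=\emptyset$ and $T_O=\emptyset$ are handled by (2) and the unique-factor argument respectively. The mixed case splits $T_O$ further into $T_{O,<b}$ and $T_{O,\not\leq b}$: if $T_{O,\not\leq b}=\emptyset$, the unique-factor argument combined with incomparability within $T'$ gives a direct contradiction to $\bigvee T'\in\B_1$; if $T_{O,\not\leq b}\neq\emptyset$, apply nestedness of $S$ to the pairwise incomparable set $T'''=T_{O,\not\leq b}\cup\{b\}\subset S$, obtaining $\bigvee T'''=\bigvee T'\vee b\notin\B_2$, and assume $\bigvee T'=c\in\B_1$. The subcases $c\geq b$ and $c\leq b$ contradict $\bigvee T'''\notin\B_2$ directly; in the remaining subcase $c,b$ incomparable, the building-set isomorphism of $\B_2$ at $c\vee b$ places $c,b$ below distinct maximal factors of $F_{\B_2}(c\vee b)$ and forces $c\wedge b=\hat 0$, contradicting any $a^{**}\in T_F$ satisfying $a^{**}\leq c\wedge b$.

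For the backward direction assume (1)--(3) and take a pairwise incomparable $T\subset S$ of size $\geq 2$. If $b\notin T$, then $T\subset S\setminus\{b\}\in N(\B_1)$ by (2), so $\bigvee T\notin\B_1$; the possibility $\bigvee T=b$ is excluded by grouping each $a\in T$ under its unique $a^*\in F_{\B_1}(b)$: each $T_{a^*}:=\{a\in T:a\leq a^*\}$ must be a singleton $\{a^*\}$, for otherwise $\bigvee T_{a^*}=a^*\in\B_1$ violates (2), so $F_{\B_1}(b)\subset T\subset S$ contradicts (1). If $b\in T$, write $T=\{b\}\cup T_0$, set $T'_0=T_0\cup F_{\B_1}(b)\subset S'$, and take the set $T_{\max}$ of its maximal elements (pairwise incomparable by definition); this has size $\geq 2$ because no $a\in T_0$ can dominate all of $F_{\B_1}(b)$ (else $a\geq\bigvee F_{\B_1}(b)=b$, contradicting incomparability), so (3) gives $\bigvee T=\bigvee T_{\max}\notin\B_1$, and $\bigvee T\neq b$ follows from $a\not\leq b$ for $a\in T_0$.

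The principal obstacle is the mixed case of the forward direction of (3), specifically the final subcase where the building-set isomorphism of $\B_2$ at $c\vee b$ is invoked to force $c\wedge b=\hat 0$. Once (1)--(3) are established, the concise identification $N(\B_2)=Bl_{F_{\B_1}(b)}N(\B_1)$ is a direct translation under the bijection $S\leftrightarrow S$ for $b\notin S$ (type-(i) elements, satisfying $S\in N(\B_1)$ with $S\not\supset F_{\B_1}(b)$) and $S\leftrightarrow[F_{\B_1}(b),S\setminus\{b\}]$ for $b\in S$ (type-(ii) elements, with (2) and (3) encoding the existence requirements); a short check confirms the order relations on the blowup agree with inclusion on $N(\B_2)$.
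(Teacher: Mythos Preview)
Your proof is correct and follows essentially the same route as the paper: the case structure is identical, and your auxiliary sets $T'''=T_{O,\not\leq b}\cup\{b\}$ and $T_{\max}$ coincide with the paper's $T'\setminus T'_{<b}$ and $T''$, respectively. The only real variation is in the final subcase of the forward direction of (3), where you finish via $c\wedge b=\hat 0$ from the $\B_2$-factorization of $[\hat 0,c\vee b]$, while the paper instead pushes the unique-factor argument one step further (to a factor of $F_{\B_1}(b)$) to force $T\subset S$; both endgames are short and of comparable difficulty.
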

\begin{proof}
 $\Rightarrow$ Let $S\in N(\B_2)$.\\
 $(1)$:  $F_{\B_1}(b)$ is pairwise incomparable, and $\bigvee F_{\B_1}(b)=b\in \B_2$ (\cite{2003icr}, $2.5.2$), so $F_{\B_1}(b)$ is not contained
 in $S$.\\
 $(2)$: Let $T\subset S\setminus\{b\}$ be pairwise incomparable. Then $\bigvee T\not\in \B_2$, and so in particular $\bigvee T\not\in \B_1$, so $S\setminus\{b\}$ is $\B_1$-nested.\\
 $(3)$: Suppose $b\in S$ and let $T\subset S\setminus \{b\}\cup F_{\B_1}(b)$ be pairwise incomparable of cardinality at least $2$.
 We start by noting that if $T\subset S$ then $\bigvee T\not\in \B_2$, and so $\bigvee T\not\in \B_1$, and if $T\subset F_{\B_1}(p)$, then $\bigvee T\not\in \B_1$, 
 since $F_{\B_1}(b)$ is $\B_1$-nested. In both these cases we are done, so assume from now on that $T\not\subset S$ and $T\not\subset F_{\B_1}(b)$.
 
 We reason by contradiction and assume that $\bigvee T\in \B_1$. Consider $T'=T\cup \{b\}\setminus F_{\B_1}(b)\subset S$. This might not be pairwise incomparable,
 since some element $t\in T\setminus F_{\B_1}(b)$ might be comparable to $b$. In case $b\leq t$, then $f\leq t$ for all 
 $f\in F_{\B_1}(b)$, and since $T$ is pairwise incomparable, we conclude that $T\cap F_{\B_1}(b)=\emptyset$, and so $T\subset S$ which is a contradiction. Hence if $T'$ is not pairwise incomparable, then this is because there are $t\in T\setminus F_{\B_1}(b)$ with $t\leq b$. So $T'\setminus T'_{< b}$ is pairwise incomparable. 

If $T'\setminus T'_{<b}=\{b\}$, then $T\subset \L_{<b}$, and so $\bigvee T\leq b$, and since we assume $\bigvee T\in \B_1$, we have a strict inequality. But then by \cite{2003icr} $2.5.1$, there is a unique $f\in F_{\B_1}(b)$ such that $\bigvee T\leq f$. Since $\abs{T}\geq 2$, $T\not=\{f\}$, and so since $T$ is pairwise incomparable we must have $T\subset \L_{<f}$. But then $T\cap F_{\B_1}(b)=\emptyset$, so $T\subset S$, a contradiction. 

We conclude that $T'\setminus T'_{<b}$ has cardinality at least two. Now we note that
 $$\bigvee T'=\bigvee (T'\setminus T'_{<b})\not\in \B_2$$
 since $(T'\setminus T'_{< b})\subset S$ is pairwise incomparable of cardinality at least $2$ and $S$ is nested. 
 Clearly:
 $$\bigvee T\leq \bigvee T'.$$
 If $\bigvee T\in \B_1\subset \B_2$ then by \cite{2003icr} $2.5.1$, there is a unique $f\in F_{\B_2}(\bigvee T')=T'\setminus T'_{< b}$ (\cite{2003icr}, $2.5.2$) such that $\bigvee T\leq f$.
 For any $t\in T'\setminus T'_{<b}$ different from $b$ we have that $t<\bigvee T$, since $t\in T$ and $T$ is pairwise incomparable, so its join
 is not equal to one of its elements. So this unique $f\in T'\setminus T'_{< b}$ must be $b$ and hence we conclude that $\bigvee T\leq b$. Since we assume that $\bigvee T\in \B_1$, and since
 by assumption $b\not\in \B_1$ we have a strict inequality $\bigvee T< b$. Then again there is a unique $f\in F_{\B_1}(b)$ such that
 $\bigvee T\leq f$. If this is an equality, then either $T=\{f\}\subset F_{\B_1}(b)$ which is not possible since $T$ has cardinality at least $2$, or 
 $T\subset S$, a contradiction. If this is a strict inequality then also $T\subset S$, again a contradiction.
 
 $\Leftarrow$ Let $S\subset \B_2$ satisfy $(1)$, $(2)$ and $(3)$, and let $T\subset S$ be a pairwise incomparable subsets of cardinality at least $2$. If $b\not\in T$, then $T\subset S\setminus\{b\}$, which is $\B_1$-nested by $(2)$, and so $\bigvee T\not\in \B_1$. So we only need to verify that $\bigvee T\not=b$. But $T$ is a pairwise incomparable subset of the $\B_1$-nested set $S\setminus\{b\}$ of cardinality at least $2$, so $F_{\B_1}(\bigvee T)=T$. Hence if $\bigvee T=b$, then $T=F_{\B_1}(b)$, but $F_{\B_1}(b)\not\subset S$ by $(1)$, so this is not possible. Hence $\bigvee T\not\in \B_2$.
 
 Now assume $b\in T$. We consider $T'=T\setminus\{b\}\cup F_{\B_1}(b)$. This need not be pairwise incomparable, since some element of $t\in T\setminus\{b\}$ may be comparable to some $f\in F_{\B_1}(b)$.  Now $t<f$ would imply that $t<b$, which is not possible since $T$ is incomparable. Hence $t$ and $f$ being comparable is only possible if $f<t$. We remove all $f\in F_{\B_1}(b)$ for which such $t$ exists from $T'$, and denote the resulting set by $T''$, which is pairwise incomparable. Note that $T\setminus \{b\}\subset T''$, since we do not remove any $f\in F_{\B_1}(b)\cap T$, since for such $f$ there can be no $f<t\in T$, since $T$ is pairwise incomparable. 
 
 If $\abs{T''}=1$, say $T\setminus\{b\}\subset T''=\{t\}$, then $T=\{t,b\}$, and for all $f\in F_{\B_1}(b)$ we have $f<t$. But then also $\bigvee F_{\B_1}(b)=b<t$, which is not possible since $T$ is pairwise incomparable. Hence $\abs{T''}>1$. 
 
 Now we note that 
 $$\bigvee T=\bigvee T'=\bigvee T'',$$
 Since $T''\subset S\setminus\{b\}\cup F_{\B_1}(b)$, which is $\B_1$ nested by $(3)$, we conclude that $\bigvee T\not\in \B_1$. Since $b\in T$ and $T$  is pairwise incomparable also $\bigvee T\not=b$, so $\bigvee T\not\in \B_2$.
\end{proof}
We remark that Lemma \ref{Lemma:nesteds} is a combinatorial analogue of \cite{CPWM}, $3.2$.

The second result allows us to go from one building set to another using a sequence of single-element additions and deletions. $Irr(\L)$ will denote the set of elements $x\in \L$ for which $[\hat 0, x]$ does not admit a non-trivial decomposition as a product of subposets as in the definition of building sets. It is shown in \cite{2003icr} that $Irr(\L)$ is a building set. It is clear that for any building set $\B$ we have $Irr(\L)\subset \B$.
\begin{lemma}
 Let $\L$ be a meet-semilattice and $\B\supsetneq Irr(\L)$ be a building set for $\L$. Then there exists a chain 
 $$Irr(\L)\subsetneq Irr(\L)\cup\{b_1\}\subsetneq\dots Irr(\L)\cup\{b_1,\dots,b_{n-1}\}\subsetneq \B=Irr(\L)\cup\{b_1,\dots,b_n\}$$
 where each $I(\L)\cup \{b_1,\dots,b_i\}$ is a building set for $\L$.
\end{lemma}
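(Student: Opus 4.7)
The plan is to induct on $n = |\B \setminus Irr(\L)|$, with the base case $n = 0$ vacuous. For the inductive step, I would establish the following key claim: if $b$ is any element of $\B \setminus Irr(\L)$ that is minimal with respect to the partial order of $\L$, then $\B' := \B \setminus \{b\}$ is again a building set for $\L$. Granting this, the inductive hypothesis applied to $\B' \supsetneq Irr(\L)$ produces a chain $Irr(\L) \subsetneq \dots \subsetneq \B'$ of building sets, which extends to the desired chain upon appending the step $\B' \subsetneq \B$ and setting $b_n := b$.

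To establish the key claim, I would verify Definition \ref{defi:BS} for $\B'$ at every $p \in \L \setminus \{\hat 0\}$; the case $p \in \B'$ is automatic. For $p = b$, the minimality of $b$ forces $\B_{<b} = Irr(\L)_{<b}$ and hence $F_{\B'}(b) = F_{Irr(\L)}(b)$, so the required decomposition of $[\hat 0, b]$ is the one provided by the fact that $Irr(\L)$ is itself a building set for $\L$. For $p \notin \B$ with $p \neq b$, I would compare $F_{\B'}(p)$ with $F_\B(p)$: when $b \notin F_\B(p)$ they coincide and the $\B$-decomposition transfers verbatim; when $b \in F_\B(p)$ one expects
$$F_{\B'}(p) = (F_\B(p)\setminus\{b\}) \cup F_{Irr(\L)}(b),$$
and the decomposition for $\B'$ at $p$ is obtained by composing the $\B$-decomposition of $[\hat 0,p]$ with the $Irr(\L)$-decomposition of its $b$-factor.

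The main obstacle is verifying this identification of $F_{\B'}(p)$ in the case $b \in F_\B(p)$, specifically that each $x \in F_{Irr(\L)}(b)$ remains maximal in $\B'_{\leq p}$. A potential violator would be some $y \in \B$ with $y \neq b$ and $x < y \leq p$. By \cite{2003icr}, Proposition $2.5.1$, applied to $\B$ and $p$, there is a unique $b_i' \in F_\B(p)$ with $y \leq b_i'$. If $b_i' \neq b$, then $x$ lies below both $b$ and $b_i'$, which in the product decomposition of $[\hat 0,p]$ forces $x \leq b \wedge b_i' = \hat 0$ (the meet of two distinct coordinate factors is $\hat 0$), contradicting $x \in Irr(\L) \subset \L \setminus \{\hat 0\}$. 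If $b_i' = b$, then $y < b$, whence the minimality of $b$ places $y$ in $Irr(\L)_{<b}$, contradicting the maximality of $x$ in that set.
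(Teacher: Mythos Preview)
Your proposal is correct and follows exactly the same strategy as the paper: remove a minimal element of $\B\setminus Irr(\L)$ and observe that the result is still a building set, then iterate. The paper simply asserts this key fact without justification, whereas you supply a full verification; your argument for it (transferring the $Irr(\L)$-decomposition of $[\hat 0,b]$ and using the product structure to rule out violators of maximality) is sound.
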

\begin{proof}
This follows from the fact that if $\L$ be a meet-semilattice, $\B$ a building set for $\L$,  and $b\in \min \B\setminus Irr(\L)$ then $\B'=\B\setminus\{b\}$ is again a building set. Using this we remove elements of $\B\setminus I(\L)$ one at a time.
\end{proof}
Note that the combination of these two lemma's yields that any nested set complex can be obtain from $N(Irr(\L))$ by a sequence of blowups. 
\section{N-functions and B-functions}
We need to introduce two new concept before we can define the combinatorial zeta function:
\begin{defi}
\label{def:NF}
 An $N$-function is a collection of functions $\chi_{\B}:N(\B)\to \mathbb{Z}$, where $\B\subset \L$ runs over all building sets for $L$.
 This data has to satisfy the following condition: if $\B_1,\B_2=\B_1\cup\{b\}$ are building sets for $\L$ then:
 \[\resizebox{1 \textwidth}{!}{
 \begin{minipage}{\textwidth}
\begin{align*}
\chi_{\B_2}(S)=\begin{cases}
\chi_{\B_1}(S)&\text{if }b\not\in S\text{ and }S\cup F_{\B_1}(b)\not\in N(\B_1)\\
\chi_{\B_1}(S)+\chi_{\B_1}(S\cup F_{\B_1}(b))(\abs{F_{\B_1}(b)\setminus S}-1)&\text{if }b\not\in S\text{ and }S\cup F_{\B_1}(b)\in N(\B_1)\\
\chi_{\B_1}(S\setminus\{b\}\cup F_{\B_1}(b))\abs{F_{\B_1}(b)\setminus S}&\text{if }b\in S
\end{cases}
\end{align*}
\end{minipage}}\]
In general if $f:N(\B_1)\to \mathbb{Z}$ is function we denote by $Bl_bf:N(\B_2)\to \mathbb{Z}$ the function obtained from $f$ according to these rules. 
\end{defi}
The idea is as follows. We think of $\B_1$ as giving us an embedded normal crossings model of some variety, and $N(\B_1)$ the corresponding divisor intersection poset. If we extend our building set to $\B_2=\B_1\cup \{b\}$
then $N(\B_2)\cong Bl_{F_{\B_1}(b)}N(\B_1)$. So the model corresponding to $\B_2$ is obtained from $\B_1$ by blowing up the stratum $F_{\B_1}(b)$. Under this interpretation the blowup relations in Definition \ref{def:NF} describe precisely the Euler characteristics of the strata after the blowup.

Finally we will need the following:
\begin{defi}
 A function $\alpha:\L\setminus\{\hat 0\}\to R^\times$, where $R$ is a ring, is a $B$-function if for all building sets
 $\B\subset \L$ and $p\in \L$ we have
 $$\alpha(p)=\smashoperator[lr]{\sum_{f\in F_{\B}(b)}}\alpha(f).$$
\end{defi}
We give some examples:
\begin{exmp}
\label{dbf}
 The following functions are $B$-functions:
 \begin{itemize}
 \itemsep0em
 \item $a:\L\setminus\{\hat 0\}\to \mathbb{Z}:p\mapsto \abs{A(\L)_{\leq x}}=\abs{\{u\in \L\mid u \text{ is an atom and }u<x\}}$
 \item $rk:\L\setminus\{\hat 0\}\to\mathbb{Z}$
 \item $C:\L\setminus\{\hat 0\}\to \mathbb{R}:p\mapsto \ln\abs{[\hat 0, x]}$
 \item $R:\L\setminus\{\hat 0\}\to \mathbb{Z}:p\mapsto \abs{Irr(\L)_{\leq p}}$
 \end{itemize}
\end{exmp}

\section{The combinatorial zeta function}
We are ready to state our main definition:
\begin{defi}
\label{def:zeta}
 Let $\L$ be a meet-semilattice, let $\B \subset \L$ be a building set, let $\chi:N(\B)\to\mathbb{Z}$ be a function and $\alpha:\B\to R^\times$ be a function. We define
 $$Z_{\L}^{\B,\chi,\alpha} = \sum_{U\in N(\B)}\chi^{\circ}(U)\prod_{A\in U}\alpha(A)^{-1}$$
 where
 $$\chi^{\circ}:N(\B)\to \mathbb{Z}:S\mapsto\sum_{T\supset S}(-1)^{|T|-|S|}\chi(T).$$
\end{defi}
The following lemma is useful in computations:
\begin{lemma}
\label{lemma:zeta}
 With the same notation as in Definition \ref{def:zeta} we have
  $$Z_{\L}^{\B,\chi,\alpha}(T) = \sum_{U\in N(\B)}\chi(U)(-1)^{\abs{U}}\frac{\prod_{a\in U}(\alpha(a)-1)}{\prod_{a\in U}\alpha(a)}.$$
\end{lemma}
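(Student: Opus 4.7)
The plan is a direct double-counting argument. I would start by substituting the definition of $\chi^{\circ}$ into the definition of $Z_{\L}^{\B,\chi,\alpha}$, obtaining a double sum over pairs $U \subset T$ of nested sets, and then swap the order of summation so that $T$ is the outer variable and $U$ the inner one.

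Before swapping, I would record the small order-theoretic observation that any subset of a nested set is nested. This is immediate from the definition of $N(\B)$: the nested condition is a universal statement over pairwise-incomparable subsets, so it descends to subsets. Consequently, when I swap sums and fix $T \in N(\B)$, the inner sum ranges over \emph{all} subsets $U \subset T$ without any further restriction:
\begin{equation*}
Z_{\L}^{\B,\chi,\alpha} \;=\; \sum_{T \in N(\B)} \chi(T) \sum_{U \subset T} (-1)^{|T|-|U|} \prod_{A \in U} \alpha(A)^{-1}.
\end{equation*}

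Next I would evaluate the inner sum using the standard factorisation
\begin{equation*}
\sum_{U \subset T} \prod_{A \in U} x_A \;=\; \prod_{A \in T}(1 + x_A),
\end{equation*}
applied with $x_A = -\alpha(A)^{-1}$, after pulling out the sign $(-1)^{|T|}$ (using that $(-1)^{|T|-|U|} = (-1)^{|T|}(-1)^{|U|}$ since $R$ is a ring and $\alpha$ takes values in $R^\times$, so the identity is a formal identity in $R$). This gives
\begin{equation*}
\sum_{U \subset T}(-1)^{|T|-|U|}\prod_{A\in U}\alpha(A)^{-1} \;=\; (-1)^{|T|}\prod_{A \in T}\bigl(1 - \alpha(A)^{-1}\bigr) \;=\; (-1)^{|T|}\frac{\prod_{A \in T}(\alpha(A)-1)}{\prod_{A \in T}\alpha(A)}.
\end{equation*}
Substituting this back yields exactly the claimed formula.

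There is no real obstacle here; the only thing to be careful about is the subset-of-nested-is-nested observation, which legitimises the unrestricted inner sum and allows the product-expansion identity to apply verbatim. Everything else is bookkeeping with signs and a swap of summation order.
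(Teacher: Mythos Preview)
Your proof is correct and is essentially the same argument as the paper's. The paper phrases it as linearity in $\chi$ followed by an explicit computation of $Z_{\L}^{\B,\delta_S,\alpha}$, but decomposing $\chi=\sum_S\chi(S)\delta_S$ and using linearity is exactly the swap of summation you perform; both then evaluate the same inner sum $\sum_{U\subset S}(-1)^{|S|-|U|}\prod_{A\in U}\alpha(A)^{-1}$ via the same product factorisation, and both rely (you explicitly, the paper tacitly) on the fact that $N(\B)$ is closed under taking subsets.
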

\begin{proof}
We start by noting that if $\chi_1,\chi_2:N(\B)\to \mathbb{Z}$ then $Z_{\L}^{\B,\chi_1+\chi_2,\alpha}=Z_{\L}^{\B,\chi_1,\alpha}+Z_{\L}^{\B,\chi_2,\alpha}$ as is easily verified.
If we write $\chi=\sum_{S\in N(\B)}\chi(S)\delta_S$, where $\delta$ is the Kronicker symbol, then we conclude that
$Z_\L^{\B,\chi,\alpha}=\sum_{S\in N(\B)}\chi(S)Z_\L^{\B,\delta_S,\alpha}$. Now $Z_\L^{\B,\delta_S,\alpha}$ can easily be computed: 
$$\delta_S^\circ:T\mapsto \begin{cases}
                  0&\text{ if }T\not\leq S\\
                  (-1)^{\abs{S}-\abs{T}}&\text{if } T\leq S
                 \end{cases}
$$
We conclude that 
$$Z_\L^{\B,\delta_S,\alpha}=\sum_{T\subset S}(-1)^{\abs{S}-\abs{T}}\prod_{a\in T}\alpha(a)^{-1}=(-1)^{\abs{S}}\frac{\prod_{a\in S}(\alpha(a)-1)}{\prod_{a\in S}\alpha(a)}.$$
\end{proof}
The following theorem shows that the definition of the zeta function does not depend on the choice of building set.
\begin{thm}
\label{Z}
Let $\L$ be a meet-semilattice, and $\B_1,\B_2=\B_1\cup\{b\}\subset \L$ be a  building set. Let $\chi:N(\B_1)\to \mathbb{Z}$ be a function, and $\alpha:\B_2\to R^\times$ be a function
such that $\alpha(b)=\sum_{F\in F_{\B_1}(b)}\alpha(F)$. Then
 $$Z_{\L}^{\B_1,\chi,\alpha}=Z_{\L}^{\B_2,Bl_b\chi,\alpha}.$$
\end{thm}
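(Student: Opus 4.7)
The plan is to use Lemma \ref{lemma:zeta} to rewrite both sides as linear combinations $\sum_T \chi(T) c(T)$ indexed by $T \in N(\B_1)$ (for the LHS) and $U \in N(\B_2)$ (for the RHS), where $c(U) = (-1)^{|U|}\prod_{a \in U}(\alpha(a)-1)/\prod_{a \in U}\alpha(a)$. By the linearity argument used in the proof of Lemma \ref{lemma:zeta}, it suffices to fix $T \in N(\B_1)$ and check that the coefficient of $\chi(T)$ is the same on both sides. The LHS coefficient is simply $c(T)$. On the RHS, $\chi(T)$ enters via the three cases of Definition \ref{def:NF} for $Bl_b\chi(U)$: directly when $U = T$, via the term $\chi(U \cup F)$ when $T = U \cup F$ (with $b \notin U$), and via $\chi((U \setminus\{b\}) \cup F)$ when $T = (U \setminus\{b\}) \cup F$ (with $b \in U$). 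Here I write $F := F_{\B_1}(b)$.

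Split by whether $F \subset T$. If $F \not\subset T$, then $T \in N(\B_2)$ by Lemma \ref{Lemma:nesteds}, and the only $U \in N(\B_2)$ producing a $\chi(T)$ contribution is $U = T$ itself; in each of the first two cases of Definition \ref{def:NF} the coefficient of $\chi(T)$ from $Bl_b\chi(T)$ is $1$, so the RHS contribution is exactly $c(T)$, as desired.

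If $F \subset T$, then $T \notin N(\B_2)$ by condition (1) of Lemma \ref{Lemma:nesteds}, so the direct contribution vanishes. Writing $T = T_0 \sqcup F$ with $T_0 = T \setminus F$, the $\chi(T)$-contributions come from (a) Type-(ii) $U$'s of the form $T_0 \cup F_0$ with $F_0 \subsetneq F$, contributing coefficient $(|F|-|F_0|-1)\, c(T_0 \cup F_0)$, and (b) Type-(iii) $U$'s of the form $T_0 \cup F_0 \cup \{b\}$ with $F_0 \subsetneq F$, contributing $(|F|-|F_0|)\, c(T_0 \cup F_0 \cup \{b\})$. Factoring out the common term $c(T_0)$, and setting $g(F_0) := \prod_{f \in F_0}(1-\alpha(f))/\alpha(f)$, the required identity reduces to
\[
g(F) \;=\; \sum_{F_0 \subsetneq F} g(F_0)\!\left[(|F|-|F_0|-1) \;-\; (|F|-|F_0|)\,\frac{\alpha(b)-1}{\alpha(b)}\right].
\]

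The main (indeed only real) obstacle is verifying this last identity, but it is a routine computation once one notes the two product-expansion formulas $\sum_{F_0 \subset F} g(F_0) = \prod_{f \in F} \alpha(f)^{-1}$ and $\sum_{F_0 \subset F}(|F|-|F_0|)g(F_0) = \sum_{f \in F}\alpha(f) \cdot \prod_{f \in F}\alpha(f)^{-1}$. The crucial hypothesis $\alpha(b) = \sum_{f \in F}\alpha(f)$ then causes the $(\alpha(b)-1)/\alpha(b)$ factor to collapse exactly so that everything but $g(F)$ cancels. Apart from this identity, the proof is pure bookkeeping against Lemma \ref{Lemma:nesteds} and the three-case formula in Definition \ref{def:NF}.
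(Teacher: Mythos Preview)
Your proof is correct and follows essentially the same strategy as the paper's: reduce by linearity to the contribution of a single $\chi(T)$ (the paper phrases this dually via $\delta_S$), split according to whether $F_{\B_1}(b)\subset T$, and in the nontrivial case reduce to a polynomial identity equivalent to the one in the paper. Your verification of that identity via the two product-expansion formulas is in fact slightly more explicit than the paper's ``compare coefficients'' argument, but the content is the same.
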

\begin{proof}
We have
$$Bl_b\chi=Bl_b\sum_{S\in N(\B_1)}\chi(S)\delta_S=\sum_{S\in N(\B_1)}\chi(S)Bl_b\delta_S.$$
It follows that
 $$Z_\L^{\B_2,Bl_b\chi,\alpha}=\sum_{S\in N(\B_1)}\chi(S)Z_{\L}^{\B_2,Bl_b\delta_S,\alpha}.$$
So it suffices to prove that $Z_{\L}^{\B_2,Bl_b\delta_S,\alpha}=Z_{\L}^{\B_1,\delta_S,\alpha}$. We consider $2$ cases.\\
 \textbf{Case 1:} $F_{\B_1}(b)\not\subset S$. In this case one easily verifies that $Bl_l\delta_S$ is supported on $N(\B_1)$ and that there
 $Bl_l\delta_S=\delta_S$, and the result follows.\\
 \textbf{Case 2:} $F_{\B_1}(b)\subset S$. We determine $Bl_b\delta_S$. By definition, for $T\in N(\B_2)$ such that $b\not\in T$ and $F_{\B_1}(b)\cup T\not\in N(\B_1)$ we have
 $Bl_b\delta_S(T)=\delta_S(T)=0$. For $T\in N(\B_2)$ such that $b\not\in T$ and $F_{\B_1}(b)\cup T\in N(\B_1)$ we have
 $$Bl_b\delta_S(T)=\delta_S(T)+\delta_S(T\cup F_{B_1}(b))(\abs{F_{\B_1}\setminus T}-1)=\begin{cases}
                                                                                        \abs{F_{\B_1}\setminus T}-1&\text{ if }T\cup F_{\B_1}=S\\
                                                                                        0&\text{ otherwise}
                                                                                       \end{cases}
$$
And finally for $T\in N(\B_2)$ such that $b\in T$ we have:
$$
Bl_b\delta_S(T)=\delta_S(T\setminus\{b\}\cup F_{B_1}(b))\abs{F_{\B_1}\setminus T}=\begin{cases}
                                                                     \abs{F_{\B_1}\setminus T}&\text{ if }T\setminus\{b\}\cup F_{\B_1}=S\\
                                                                     0&\text{ otherwise}
                                                                    \end{cases}
$$
Now $T\cup F_{\B_1}(b)=S$ if and only if $T=S\setminus F_{\B_1}(b)\cup U$ for some $U\subsetneq F_{\B_1}(b)$. Write $S'=S\setminus F_{\B_1}(b)$. Then
$$Bl_b\delta_S=\sum_{U\subsetneq F_{\B_1}(b)}(\abs{F_{\B_1}(b)}-\abs{U}-1)\delta_{S'\cup U}+(\abs{F_{\B_1}(b)}-\abs{U})\delta_{S'\cup U\cup\{b\}}$$
and hence
\begin{align*}
 Z_{\L}^{\B_2,Bl_b\delta_S,\alpha}=\sum_{U\subsetneq F_{\B_1}(b)}(\abs{F_{\B_1}(b)}-\abs{U}-1)Z_\L^{\B_2,\delta_{S'\cup U},\alpha}+(\abs{F_{\B_1}(b)}-\abs{U})Z_\L^{\B_2,\delta_{S'\cup U\cup\{b\}},\alpha}.
\end{align*}
Now we substitute the expression for $Z_\L^{\B_2,\delta_{S'\cup U},\alpha}$ that we found above:

\begin{align*}
 Z_{\L}^{\B_2,Bl_b\delta_S,\alpha}&=\sum_{U\subsetneq F_{\B_1}(b)}\left((-1)^{\abs{S'\cup U}}(\abs{F_{\B_1}(b)}-\abs{U}-1)\frac{\prod_{A\in S'\cup U}\alpha(A)-1}{\prod_{A\in S'\cup U}\alpha(A)}\right.\\
 &\left.\quad\quad\quad\quad\quad\quad+(-1)^{\abs{S'\cup U\cup \{b\}}}(\abs{F_{\B_1}(b)}-\abs{U})\frac{\alpha(b)-1}{\alpha(b)}\frac{\prod_{A\in S'\cup U}\alpha(A)-1}{\prod_{A\in S'\cup U}\alpha(A)}\right)\\
%  &=(-1)^{\abs{S}}\frac{\prod_{A\in S}\alpha(A)-1}{\prod_{A\in S}(A)}\sum_{U\subsetneq F_{\B_1}(b)}(-1)^{\abs{U}}(\abs{F_{\B_1}(b)}-\abs{U}-1-\frac{a(b)-1}{a(b)}(\abs{F_{\B_1}(b)}-\abs{U}))\frac{\prod_{A\in U}a(A)-1}{\prod_{A\in U}a(A)}\\
%  &=(-1)^{\abs{S}}\frac{1}{a(b)}\frac{\prod_{A\in S}a(A)-1}{\prod_{A\in S}(A)}\sum_{U\subsetneq F_{\B_1}(b)}(-1)^{\abs{U}}(a(b)(\abs{F_{\B_1}(b)}-\abs{U}-1)-(a(b)-1)(\abs{F_{\B_1}(b)}-\abs{U}))\frac{\prod_{A\in U}a(A)-1}{\prod_{A\in U}a(A)}\\
%  &=(-1)^{\abs{S}}\frac{1}{a(b)}\frac{\prod_{A\in S}a(A)-1}{\prod_{A\in S}(A)}\sum_{U\subsetneq F_{\B_1}(b)}(-1)^{\abs{U}}(\abs{F_{\B_1}(b)}-\abs{U}-a(b))\frac{\prod_{A\in U}a(A)-1}{\prod_{A\in U}a(A)}\\
 &=G\sum_{U\subsetneq F_{\B_1}(b)}(-1)^{\abs{U}}(\abs{F_{\B_1}(b)}-\abs{U}-\alpha(b))\prod_{A\in F_{\B_1}(b)\setminus U}\alpha(A)\prod_{A\in U}\alpha(A)-1\\
\end{align*}
where
$$G=(-1)^{\abs{S'}}\frac{\prod_{A\in S'}\alpha(A)-1}{\prod_{A\in S\cup\{b\}}\alpha(A)}.$$
Since we want to show that $Z_{\L}^{\B_2,Bl_b\delta_S,\alpha}$ equals $Z_{\L}^{\B_1,\delta_S,\alpha}$ we are done when we prove that
\begin{align*}
\sum_{U\subsetneq F_{\B_1}(b)}(-1)^{\abs{U}}&(\abs{F_{\B_1}(b)}-\abs{U}-\alpha(b))\prod_{A\in F_{\B_1}(b)\setminus U}\alpha(A)\prod_{A\in U}\alpha(A)-1\\
&=\alpha(b)(-1)^{\abs{F_{\B_1}(b)}}\prod_{A\in F_{\B_1}(b)}\alpha(A)-1 
\end{align*}
since in that case the last expression in the above chain of equalities equals $Z_{\L}^{\B_1,\delta_S,\alpha}$.
We first consider the sum
$$P=\sum_{U\subsetneq F_{\B_1}(b)}(-1)^{\abs{U}}(\abs{F_{\B_1}(b)}-\abs{U})\prod_{A\in F_{\B_1}(b)\setminus U}\alpha(A)\prod_{A\in U}\alpha(A)-1.$$
By consider this as a polynomial in the $\alpha(A)$ and comparing coefficients, and the assumption on $\alpha$, we conclude that:
$$P=\sum_{A\in F_{\B_1}(b)}\alpha(A)=\alpha(b),$$
and it follows that
\begin{align*}
\sum_{U\subsetneq F_{\B_1}(b)}&(-1)^{\abs{U}}(\abs{F_{\B_1}(b)}-\abs{U}-\alpha(b))\prod_{A\in F_{\B_1}(b)\setminus U}\alpha(A)\prod_{A\in U}\alpha(A)-1\\
&=\alpha(b)\left(1-\sum_{U\subsetneq F_{\B_1}(b)}(-1)^{\abs{U}}\prod_{A\in F_{\B_1}(b)\setminus U}\alpha(A)\prod_{A\in U}\alpha(A)-1\right).
\end{align*}
Hence we are done if we show that
$$1-\sum_{U\subsetneq F_{\B_1}(b)}(-1)^{\abs{U}}\prod_{A\in F_{\B_1}(b)\setminus U}\alpha(A)\prod_{A\in U}\alpha(A)-1=(-1)^{\abs{F_{\B_1}(b)}}\prod_{A\in F_{\B_1}(b)}\alpha(A)-1,$$
which follows by considering both as polynomials in formal variables $\alpha(A)$ and comparing coefficients.
\end{proof}
We make a few comments on this definition. Suppose $M$ is a complex realisable hyperplane arrangement with intersection poset $F(M)$, the lattice of flats of $M$. 
Define $\chi_0:F(M)\to \mathbb{Z}$ to be the function
$$\chi_0(F)=1.$$
So $\chi_0$ gives the Euler characteristic of the strata in $F(M)$ (since $\chi(\mathbb{A}^n)=1$). Then define for any building set $\B\subset F(M)$
$$\chi_{\B}=Bl_{b_n}\dots Bl_{b_0}\chi_0.$$
So $\chi_{\B}$ gives the Euler characteristics of the strata in the wonderful model for the hyperplane arrangement. 
Since this model is independent of the linear refinement of $\B$ so is the definition of $\chi_{\B}$.
These functions $\chi_{\B}$ define an $N$-function, since it is known that the wonderful model obtained from $\chi\cup\{b\}$ is the blowup of the wonderful model obtained from $\B$ along the stratum corresponding to $F_\B(b)$, and hence $\chi_{\B\cup\{b\}}=Bl_{F_\B(b)}\chi_\B$, as required. Since it is known that the topological zeta function is independent of the log resolution used to compute it, it was to be expected that the definition of an $N$-function was the correct one if we wanted to prove independence of $Z_\L^{\B,\chi,\alpha}$ of the building set. The condition that $\alpha$ be a $B$-function was not expected based on our understanding of the topological zeta function, and was only extracted after we had completed the proof.
\section{Euler characteristics of nested sets}
We define the following algebra based on section $5$ of \cite{CPWM} and think of it as describing the cohomology algebra of the strata in a log resolution of the complement of $M$, where $M$ is a possibly non-realisable matroid.
 \begin{defi}
 \label{cohom}
 Let $\mathcal{L}$ be a ranked atomic lattice, $\B\subset \mathcal{L}$ be a building set of $\mathcal{L}$ and $I\in N(\B)$. Then we define
 $$D(\mathcal{L},\B,I)=\mathbb{Z}[\{x_b\}_{b\in \B}]/J_I$$
 
 where $J_I$ is the ideal generated by the polynomials
 $$P^I_{H,B}:= \prod_{A\in H}x_A\left( \sum_{C\supset B}x_C  \right)^{d^S_{H,B}}$$
 where $H\subset\B$ runs over all subsets of $\B$, $B\in \B$ runs over all elements s.t. $B>h$ for all $h\in H$ and
   $d^S_{H,B}$ denotes the minimal number of atoms $A_1,\dots,A_m\in A(\L)$ such that $B=\bigvee H\cup S_{<B}\cup\{A_1,\dots,A_m\}$.   
\end{defi}
In \cite{Feichtner2004} the authors considered the algebra's $D(\L,\B,\emptyset)$, and in \cite{HTM} the authors consider the algebra's $D(\L,\L\setminus\{\hat 0\},\emptyset)$. The algebra $D(\L,\L\setminus\{\hat 0\},\emptyset)$ is also referred to as the Chow ring of the fine subdivision of the Bergman fan of $M$. See \cite{Feichtner2004} for details on the equivalence of Definition \ref{cohom} with the definition given in \cite{HTM}.

Based on our interpretation of this algebra we make the following definition
\begin{defi}
Let $\L$ be a ranked atomic lattice and $\B\subset \L$ a building set. We define:
$$\chi^{arr}_\B:N(\B)\to \mathbb{Z}:I\mapsto rk_\mathbb{Z}D(\L,\B,I).$$
\end{defi}
It follows from \cite{CPWM} that $\chi^{arr}_\B$ is an $N$-function for $\L=F(M)$ with $M$ a $\mathbb{C}$ realisable matroid. We will prove that this is always the case.
 The proof is based on the following proposition, the proof of which is essentially given in \cite{Feichtner2004} in the case $I=\emptyset$. The proof can be adapted to the case $I\not=\emptyset$.
\begin{prop}
\label{basis}
 A $\mathbb{Z}$-basis for $D(\mathcal{L},\B,S)$ is given by 
 $$\prod_{A\in H}x_A^{m_A}$$
 where $H\subset \B$ runs over all subsets such that $S\cup H\in N(\B)$ and $m_A<d^S_{H_{<A},A}$.
\end{prop}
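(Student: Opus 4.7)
The plan is to adapt the proof given in \cite{Feichtner2004} for the case $S=\emptyset$. The argument has two parts: showing that the proposed monomials span $D(\L,\B,S)$ over $\mathbb{Z}$, and showing they are linearly independent. Fix a linear extension of the partial order on $\B$ and take a compatible monomial order on $\mathbb{Z}[\{x_b\}_{b\in\B}]$, for instance the reverse lexicographic order induced by this extension.

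For spanning, I would proceed by induction on this monomial order. Given a monomial $m = \prod_{A \in H} x_A^{m_A}$ not in the claimed basis, two cases arise. In the first case $S \cup H \notin N(\B)$, so one may choose a pairwise incomparable $T \subset S \cup H$ of size at least $2$ with $B := \bigvee T \in \B$ and set $H' = T \cap H$. Since $S$ is $\B$-nested, $H' \neq \emptyset$; and since $T$ is pairwise incomparable with join $B$ and $|T| \geq 2$, each element of $T$ is strictly below $B$, giving $T \cap S \subset S_{<B}$. It follows that $d^S_{H', B} = 0$, so the relation reduces to $P^S_{H', B} = \prod_{A \in H'} x_A \in J_S$, and hence $m \in J_S$. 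In the second case $S \cup H \in N(\B)$ but some $m_A \geq d := d^S_{H_{<A}, A}$; expanding $P^S_{H_{<A}, A}$ and isolating the term $\prod_{A' \in H_{<A}} x_{A'} \cdot x_A^d$ expresses it modulo $J_S$ as a sum of monomials, each involving some $x_C$ with $C > A$ in the chosen linear extension, hence strictly smaller in the order. Multiplying by the remaining factors of $m$ closes the induction.

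For linear independence, I would adapt the Gröbner basis computation of \cite{Feichtner2004}: one aims to show that the polynomials $P^S_{H,B}$ form a Gröbner basis of $J_S$ whose leading monomials are exactly the complement of the proposed basis. The main obstacle lies precisely in this step: when $S \neq \emptyset$ the exponents $d^S_{H,B}$ are typically strictly smaller than $d^\emptyset_{H,B}$ because $S_{<B}$ supplies atoms ``for free'', and the leading-term analysis and the syzygy reductions of \cite{Feichtner2004} must be rechecked to verify that the new set of leading monomials still exhausts precisely the non-basis monomials. A possible alternative, avoiding a full Gröbner basis argument, is to compute the Hilbert series of $D(\L,\B,S)$ directly, match it against the generating function of the claimed basis, and combine this with the already-established spanning to conclude.
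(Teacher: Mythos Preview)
The paper does not actually supply a proof of this proposition: it states only that the proof in \cite{Feichtner2004} for the case $S=\emptyset$ ``can be adapted to the case $I\neq\emptyset$'' and leaves it at that. Your proposal is precisely this adaptation, so you are following exactly the route the paper indicates; in fact your outline of the spanning reduction (Cases 1 and 2) and the Gr\"obner-basis/Hilbert-series dichotomy for independence is considerably more detailed than anything the paper records. Your Case~1 argument is correct as written: since $\L$ is a lattice the witnessing antichain $T$ with $\bigvee T\in\B$ exists, $H'=T\cap H$ is nonempty because $S$ is nested, and $T\subset H'\cup S_{<B}$ forces $d^S_{H',B}=0$. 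Your Case~2 and the independence step are where the genuine work of the adaptation lies, and you have correctly flagged the point that needs checking, namely that the smaller exponents $d^S_{H,B}\le d^\emptyset_{H,B}$ still yield a Gr\"obner basis whose initial ideal has exactly the right complement.
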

For any $H\subset \B$ such that $S\cup H\in N(\B)$ we denote:
$$C_{H}^S=\prod_{A\in H} (d_{H_{<A},A}^S-1).$$
So with this notation we can write
$$\chi_{\B}^{arr}(S)=\smashoperator[lr]{\sum_{\substack{H\subset \B\\H\cup S\in N(\B)}}}C_{H}^S.$$

\begin{lemma}
\label{dprop1}
 Let $\B_1,\B_2=\B_1\cup\{b\}\subset \L$ be building sets. Let $S\in N(\B_2)$ with $b\in S$ and write $S'=S\setminus\{b\}\cup F_{\B_1}(b)\in N(\B_1)$. Let $H=H'\cup M\subset \B_1$ be such that: $H'\cap F_{\B_1}(b)=\emptyset, M\subset F_{\B_1}(b)$ and
 $H'\cup M\cup S\in N(\B_2)$. Then for all $A\in H'\cup F_{\B_1}(b)$:
 \begin{enumerate}
  \item  $d^S_{(H'\cup M)_{<A},A}=d^S_{H'_{<A},A}$
  \item  $d^S_{(H'\cup M\cup\{b\})_{<A},A}=d^S_{(H'\cup M)_{<A},A}$
  \item  $d^{S'}_{(H'\cup M)_{<A},A}=d^{S'}_{H'_{<A},A}$
  \item  $d^{S'}_{H'_{<A},A}=d^{S}_{H'_{<A},A}$
  \item  $d^S_{(H'\cup M)_{<b},b}=\sum_{F\in F_{\B_1}(b)\setminus (S\cup M)}d_{H'_{<F},F}^{S'}=\sum_{F\in F_{\B_1}(b)\setminus (S\cup M)}d_{H'_{<F},F}^S$
 \end{enumerate}
\end{lemma}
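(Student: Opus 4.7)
The plan is to observe that each quantity $d^S_{H,B}$ depends only on the join $\bigvee H \vee \bigvee S_{<B}$ (which is automatically $\leq B$, and whose atoms below $B$ not below this join are exactly what must be supplied). Hence for (1)--(4) it suffices to show the relevant joins agree in each case, which I will establish by case analysis on the position of $A$ relative to $b$. The easy cases are the following. If $A \in F_{\B_1}(b)$, then $F_{\B_1}(b)_{<A} = \emptyset$ by antichain-ness and $b \not< A$, trivializing all of (1)--(4). If $A \in H'$ with $A < b$, then $A < f$ for a unique $f \in F_{\B_1}(b)$ (using $H' \cap F_{\B_1}(b) = \emptyset$), and no other $f' \in F_{\B_1}(b)$ can lie below $A$ without violating the antichain, so $F_{\B_1}(b)_{<A} = \emptyset$ and $b \not< A$. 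If $A \in H'$ with $A > b$, then $b \in S_{<A}$ and $F_{\B_1}(b) \subset \L_{<A}$, absorbing all relevant elements into $\bigvee S_{<A}$; moreover $\bigvee F_{\B_1}(b) = b$ makes $\bigvee S_{<A}$ agree with $\bigvee S'_{<A}$. For (3) in the remaining case where $A$ is incomparable to $b$, any $f \in M_{<A}$ lies in $F_{\B_1}(b) \subset S'$, hence in $S'_{<A}$, and is absorbed directly.

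The hard part will be the case $A \in H'$ with $A$ incomparable to $b$, which arises in (1) and (4). The plan is to use that nestedness of $\{A,b\} \subset H \cup S \in N(\B_2)$ forces $A \vee b \notin \B_2$, so the building set property applied to $A \vee b$ yields $[\hat 0, A \vee b] \cong \prod_{g \in F_{\B_2}(A \vee b)} [\hat 0, g]$. In this product, distinct factors meet at $\hat 0$, so any nonzero element $x$ of $[\hat 0, A \vee b]$ lies $\leq$ a unique $g$, which I will call the \emph{factor} of $x$. Then $A$ and $b$ have factors $g_A, g_b$, necessarily distinct (otherwise $A \vee b \leq g_A < A \vee b$). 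A hypothetical $f \in F_{\B_1}(b)$ with $f < A$ would satisfy $f \leq A \leq g_A$ and $f \leq b \leq g_b$ simultaneously, forcing its factor to be both $g_A$ and $g_b$, whence $f = \hat 0$, contradicting $f \in \B_1 \subset \L \setminus \{\hat 0\}$. This shows $F_{\B_1}(b)_{<A} = \emptyset$ and completes (1) and (4).

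For (5), I plan to exploit the factorization $[\hat 0, b] \cong \prod_{f \in F_{\B_1}(b)} [\hat 0, f]$. By the same factor-uniqueness argument, atoms below $b$ and elements of $\B_1$ strictly below $b$ each lie in a unique factor of this product. Projecting $\bigvee((H' \cup M)_{<b} \cup S_{<b})$ onto factor $f \in F_{\B_1}(b)$, the contribution from $M$ equals $f$ iff $f \in M$; the contribution from $H'_{<b}$ equals $\bigvee H'_{<f}$ (since $h \in H'_{<b}$ has factor $f$ exactly when $h < f$, using $H' \cap F_{\B_1}(b) = \emptyset$); and the contribution from $S_{<b}$ equals $f$ if $f \in S$, else $\bigvee S_{<f}$. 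Hence the $f$-factor equals $f$ itself when $f \in M \cup S$ (requiring zero atoms) and equals $\bigvee H'_{<f} \vee \bigvee S_{<f}$ otherwise (requiring $d^S_{H'_{<f},f}$ atoms). Summing across factors gives the second equality of (5); the first then follows by applying (4) to each summand.
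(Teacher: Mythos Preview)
Your proof is correct and follows essentially the same strategy as the paper's: reduce each equality to an equality of joins, and establish the key fact that no $f\in F_{\B_1}(b)$ can lie strictly below $A$ unless $A>b$, using the building-set product decomposition of $[\hat 0, A\vee b]$ (respectively of $[\hat 0,b]$ for part~(5)). The paper packages this as the single equivalence ``$A>F\Leftrightarrow A>b$ for all $F\in F_{\B_1}(b)$'' and cites \cite{2003icr}, Propositions~2.5.1 and~2.8, whereas you organize it as a case split on the position of $A$ relative to $b$ and extract the needed uniqueness-of-factor statement directly from the building-set axiom; the content is the same. Two small remarks: (i) part~(2) is uniformly trivial because $b\in S$, so you need not fold it into your case analysis at all; (ii) in your last sentence, the summing-across-factors step actually yields $d^S_{(H'\cup M)_{<b},b}=\sum_F d^S_{H'_{<F},F}$, i.e.\ the composite of the two equalities in~(5), after which~(4) gives the middle term---so your labeling of ``first'' and ``second'' equality is slightly off, but the argument is complete.
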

\begin{proof}
Let $A\in H'\cup F_{\B_1}(b)$. \\
 $1):$ It suffices to prove that $\bigvee(H'\cup M\cup S)_{<A}=\bigvee(H'\cup S)_{<A}$. For this it suffices to prove that for all $F\in F_{\B_1}(b)$ we have $A>F\Leftrightarrow A>b$.
 To see that this suffices note that if $A\not>F$ for all $F$ then clearly $(H'\cup M\cup S)_{<A}=(H'\cup S)_{<A}$. If $A>F$ for some $F$, then we conclude that $b\in S_{<A}\subset (H'\cup M\cup S)_{<A}$,
 which makes $M$ redundant since $M\subset (\B_1)_{<b}$.\\
 One implication is obvious: if $A>b$ then $A>F$ for all $F\in F_{\B_1}(b)$. For the other implication let $A>F$ for some $F$. Suppose $A\not>b$.
 We cannot have $b>A$, since then $F<A<b$, but $F\in F_{\B_1}(b)=\max (\B_1)_{<b}$. Hence $A$ and $b$ are incomparable. Then $\{A,b\}\subset H'\cup S\in N(\B_2)$ shows that $\{A,b\}\in N(\B_2)$.
 From \cite{2003icr} Proposition $2.8$, we conclude that $F_{\B_2}(A\vee b)=\{A,b\}$. 
 But then by \cite{2003icr} Proposition $2.5.1$ we have either $F<A$ or $F<b$ but not both, which is a contradiction.\\
 $2):$ This is immediate from that fact that $b\in S$.\\
 $3):$ This is immediate from the fact that $M\subset F_{\B_1}(b)\subset S'$.\\
 $4):$ It suffices to prove that $\bigvee (H'\cup S')_{<A}=\bigvee (H'\cup S)_{<A}$. We already know from part $(1)$ that $A>F\Leftrightarrow A>b$ for all $F\in F_{\B_1}(b)$.
 This means that for $M\subset F_{\B_1}(b)$:
 $$M\subset S'_{<A}\Leftrightarrow b\in S_{<A}\Leftrightarrow F_{\B_1}(b)\subset S'_{<A}.$$
 Hence if $b\not\in S_{<A}$ then $S'_{<A}\cap F_{\B_1}(b)=\emptyset$, which shows that $S_{<A}=S'_{<A}$.
 If $b\in S_{<A}$ then $F_{\B_1}(b)\subset S'_{<A}$, in which case
 $$\bigvee (H'\cup S\setminus\{b\})_{<A}\cup \{b\}=\bigvee (H'\cup S\setminus\{b\})_{<A}\cup F_{\B_1}(b)=\bigvee (H'\cup S')_{<A}.$$
 $5):$ By \cite{2003icr} Proposition $2.5.1$ we have 
 $$(\B_1)_{<b}=(\B_2)_{<b}=\sqcup_{F\in F_{\B_1}(b)} (\B_1)_{\leq F}=\sqcup_{F\in F_{\B_1}(b)} (\B_2)_{\leq F}.$$
 It follows that for any $B\subset (\B_1)_{<b}$ we have $B=\sqcup_{F\in F(\B_1)(b)}B_{\leq F}$, and by definition of a building set we conclude that
 $$\bigvee B=\bigvee_{F\in F_{\B_1}(b)} \left(\bigvee B_{\leq F}\right)=b\Leftrightarrow \forall F\in F_{\B_1}(b): \bigvee B_{\leq F}=F$$
 Since in particular all atoms are in $\B_1$ we find that for any set $A\subset A(\L)$ of atoms:
  \begin{align*}
    \bigvee (S\cup H'\cup M)_{<b}\cup A =b&\Leftrightarrow \forall F\in F_{\B_1}(b): \bigvee(S\cup H'\cup M\cup A)_{\leq F}=F\\
    &\Leftrightarrow \forall F\in F_{\B_1}(b)\setminus (S\cup M): \bigvee(S\cup H')_{< F}\cup A_{\leq F}=F.
  \end{align*}
  This shows that for any set of atoms $A$ such that $(S\cup H'\cup M)_{<b}\cup A=b$ we get disjoint sets of atoms $A_{\leq F}$ such that $(S\cup H')_{<F}\cup A_{\leq F}=F$ for $F\in F_{\B_1}(b)\setminus (S\cup M)$,
  and vice versa. This completes the proof of the first equality. The second equality follows from part $(4)$.
\end{proof}
We will now prove that $\chi^{arr}$ is an $N$-function. The proof is split into $3$ parts, according to the three different cases in the definition of an $N$-function. We begin with the simplest case:
\begin{lemma}
\label{case1}
 Let $\L$ be a ranked atomic lattice and $\B_1,\B_2=\B_1\cup\{b\}\subset \L$ building sets. Let $S\in N(\B_2)$ be such that $S\in N(\B_1)$ and $S\cup F_{\B_1}(b)\not\in N(\B_1)$.
 Then $\chi_{\B_2}^{arr}(S)=\chi_{\B_1}^{arr}(S)$
\end{lemma}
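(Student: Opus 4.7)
The plan is to match the defining sums
$$\chi_{\B_i}^{arr}(S) = \sum_{\substack{H \subset \B_i \\ H \cup S \in N(\B_i)}} C_H^S \qquad (i=1,2)$$
term-by-term. The key preliminary observation is that, looking at Definition \ref{cohom}, the integer $d^S_{H_{<A},A}$ only involves the lattice $\L$, its atoms, and the sets $H$ and $S$: the ambient building set plays no role in its definition. Consequently $C_H^S$ is intrinsic to $\L$ and the set $H\cup S$, and it suffices to show
$$\{H \subset \B_1 : H \cup S \in N(\B_1)\} = \{H \subset \B_2 : H \cup S \in N(\B_2)\}.$$

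To prove this set equality I would analyse the right-hand side by splitting on whether or not $b \in H$, applying Lemma \ref{Lemma:nesteds} to translate $H \cup S \in N(\B_2)$ into conditions on $N(\B_1)$. If $b \in H$, condition (3) of that lemma requires $(H \setminus \{b\}) \cup S \cup F_{\B_1}(b) \in N(\B_1)$. But this set contains $S \cup F_{\B_1}(b)$, which by hypothesis is \emph{not} in $N(\B_1)$; since $N(\B_1)$ is an abstract simplicial complex, supersets of non-nested sets are non-nested, so condition (3) fails. Hence no $H$ with $b \in H$ contributes.

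If $b \notin H$, then $H \subset \B_1$ and, since $b \notin S$ as well, the three conditions of Lemma \ref{Lemma:nesteds} collapse to: $H \cup S \in N(\B_1)$ together with $F_{\B_1}(b) \not\subset H \cup S$. The latter is automatic: if $H \cup S \in N(\B_1)$ and $F_{\B_1}(b) \subset H \cup S$, then by the simplicial complex property the subset $S \cup F_{\B_1}(b)$ would lie in $N(\B_1)$, contradicting the hypothesis. So the $\B_2$-indexing set matches the $\B_1$-indexing set exactly, and combined with the building-set-independence of $C_H^S$ this yields $\chi_{\B_2}^{arr}(S) = \chi_{\B_1}^{arr}(S)$.

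The mildly subtle point is the first one---that $C_H^S$ depends only on $\L$ and $H\cup S$---but this is a direct unravelling of Definition \ref{cohom}, since neither $\bigvee\bigl(H_{<A}\cup S_{<A}\bigr)$ nor the atom set $A(\L)$ references the building set. Everything else is essentially bookkeeping with Lemma \ref{Lemma:nesteds} and the closure of $N(\B_1)$ under taking subsets.
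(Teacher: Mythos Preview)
Your proof is correct and follows essentially the same route as the paper's: both use Lemma~\ref{Lemma:nesteds} to rule out $b\in H$ (via condition~(3) forcing $S\cup F_{\B_1}(b)\in N(\B_1)$, contrary to hypothesis), then invoke the building-set-independence of $d^S_{H_{<A},A}$ to identify the remaining terms. If anything, you are slightly more careful than the paper in explicitly checking the reverse inclusion (that $H\cup S\in N(\B_1)$ implies $H\cup S\in N(\B_2)$ via the automatic condition $F_{\B_1}(b)\not\subset H\cup S$), which the paper leaves implicit.
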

\begin{proof}
 Let $S\in N(\B_2)$ be such that $S\in N(\B_1)$ and $S\cup F_{\B_1}(b)\not\in N(\B_1)$. Then let $H\subset N(\B_2)$ be such that $S\cup H\in N(\B_2)$. Then
 $b\not\in H$ since otherwise $S\cup H\setminus \{b\}\cup F_{\B_1}(b)\in N(\B_1)$ by Lemma \ref{Lemma:nesteds} and then also $S\cup F_{\B_1}(b)\in N(\B_1)$. 
 We then conclude, again by Lemma \ref{Lemma:nesteds}, that $S\cup H\in N(\B_1)$. Since $d^S_{H_{<A},A}$ is independent of the building set, we conclude that
 $\prod_{A\in H}x_A^{m_A}$ is a basis element for $D(\L,\B_1,S)$ if and only if it is a basis element for $D(\L,\B_2,S)$, and hence $\chi_{\B_1}^{arr}(S)=\chi_{\B_2}^{arr}(S)$.
\end{proof}
Now the two more difficult cases.
\begin{lemma}
\label{case2}
 Let $\L$ be a ranked atomic lattice and $\B_1,\B_2=\B_1\cup\{b\}\subset \L$ building sets. 
 Let $S\in N(\B_2)$ be such that $b\in S$. Then $\chi_{\B_2}^{arr}(S)=\chi_{\B_1}^{arr}(S\setminus \{b\}\cup F_{\B}(b))\abs{F_{\B_1}(b)\setminus S}$.
\end{lemma}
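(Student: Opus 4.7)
The approach is to expand both sides via the basis of Proposition~\ref{basis} and to parametrize the two sums compatibly with the blowup at $b$. Write $K := S \cap F_{\B_1}(b)$ and $L := F_{\B_1}(b) \setminus K$, so that $|L| = |F_{\B_1}(b) \setminus S|$ is the target factor.

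Every $H \subset \B_2$ decomposes uniquely as a disjoint union $H = H' \sqcup M \sqcup B$ with $H' \subset \B_1 \setminus F_{\B_1}(b)$, $M \subset F_{\B_1}(b)$, and $B \subset \{b\}$. Lemma~\ref{Lemma:nesteds} shows that $H \cup S \in N(\B_2)$ is equivalent to $H' \cup S' \in N(\B_1)$ together with $M \cap L \neq L$, with no constraint on $B$. Any $H'' \subset \B_1$ admits the analogous splitting $H'' = H' \sqcup H_F$ with $H_F \subset F_{\B_1}(b)$, and since $F_{\B_1}(b) \subset S'$ makes $H_F$ inert, the condition $H'' \cup S' \in N(\B_1)$ reduces to $H' \cup S' \in N(\B_1)$. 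Thus both sums will eventually be indexed by the same set of valid $H'$.

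Setting $\alpha_F := d^{S'}_{H'_{<F}, F}$ and $\beta_{M_L} := \sum_{F \in L \setminus M_L} \alpha_F$ with $M_L := M \cap L$, parts (1), (2), (4), (5) of Lemma~\ref{dprop1} give the factorizations
$$C_H^S = C_{H'}^{S'} \prod_{F \in M}(\alpha_F - 1)\, \varepsilon_B, \qquad C_{H''}^{S'} = C_{H'}^{S'} \prod_{F \in H_F}(\alpha_F - 1),$$
with $\varepsilon_\emptyset = 1$ and $\varepsilon_{\{b\}} = \beta_{M_L} - 1$. Summing $C_{H''}^{S'}$ over $H_F$ yields $\chi^{arr}_{\B_1}(S') = \sum_{H'} C_{H'}^{S'} \prod_{F \in F_{\B_1}(b)} \alpha_F$. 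On the other side, summing $C_H^S$ over $B$ collapses $\varepsilon_B$ to $\beta_{M_L}$, and splitting $M = M_K \sqcup M_L$ factors out $\prod_{F \in K} \alpha_F$ from the free $M_K$-sum, leaving
$$\chi^{arr}_{\B_2}(S) = \sum_{H'} C_{H'}^{S'} \prod_{F \in K}\alpha_F \sum_{M_L \subsetneq L} \prod_{F \in M_L}(\alpha_F - 1)\,\beta_{M_L}.$$

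The two expressions match exactly once one proves the polynomial identity
$$\sum_{M_L \subsetneq L} \prod_{F \in M_L}(\alpha_F - 1) \sum_{F' \in L \setminus M_L}\alpha_{F'} = |L|\prod_{F \in L}\alpha_F,$$
which follows by swapping the order of summation: for each fixed $F' \in L$, the admissible $M_L$ are exactly the subsets of $L \setminus \{F'\}$, the inner sum collapses to $\prod_{F \in L \setminus \{F'\}}\alpha_F$, and summing over the $|L|$ values of $F'$ gives the right-hand side. The main obstacle is the factorization of $C_H^S$: correctly tracking how $H_{<A}$, $H_{<F}$, and $H_{<b}$ change when $M$ and $B$ are adjoined requires all the redundancy results of Lemma~\ref{dprop1}, and in particular the telescoping identity (5) is exactly what produces the clean $\beta_{M_L}$ at the $A = b$ factor.
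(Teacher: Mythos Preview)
Your proposal is correct and follows essentially the same approach as the paper's proof: both decompose the index sets $H$ on each side into a part $H'$ disjoint from $F_{\B_1}(b)$ and a part inside $F_{\B_1}(b)\cup\{b\}$, invoke Lemma~\ref{dprop1} to factor $C_H^S$ and $C_{H''}^{S'}$ over these pieces, and reduce to the same polynomial identity in the variables $\alpha_F=d^{S'}_{H'_{<F},F}$. Your notation $K,L,\alpha_F,\beta_{M_L}$ streamlines the bookkeeping, and your swap-of-summation argument for the final identity (fixing $F'\in L$ and summing freely over $M_L\subset L\setminus\{F'\}$) is a cleaner justification than the paper's ``comparing coefficients'', but the overall architecture and the key inputs are the same.
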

\begin{proof}
We write $S'= S\setminus \{b\}\cup F_{\B}(b)$. The idea of the proof is summarised in the following two lines, the first one of which is obvious, and the second one is an immediate consequence of Lemma \ref{Lemma:nesteds}:
$$H\cup S'\in N(\B_1)\Leftrightarrow H\cup M\cup S\in N(\B_1),\text{ for all }M\subset F_{\B_1}(b),\text{ and}$$
$$H\cup S\in N(\B_2)\Leftrightarrow H\cup M\cup \{b\}\cup S\in N(\B_2),\text{ for all }M\subsetneq F_{\B_1}(b)\setminus S.$$
It follows that all $H\subset \B_1$ such that $S'\cup H\in N(\B_1)$ can uniquely be written as $H=H'\cup M$ for some $H'\subset \B_1$ such that $H'\cap F_{\B_1}(b)=\emptyset, S'\cup H'\in N(\B_1)$
and $M\subset F_{\B_1}(b)$, and for any such $H'$ and $M$ we have that $S'\cup H'\cup M\in N(\B_1)$. In other words, once we fix $H'$ as above we can let $M$ vary in $B_{F_{\B_1}(b)}$, the boolean lattice 
of subsets of $F_{\B_1}(b)$.

Similarly, all $H\subset \B_2$ such that $S\cup H\in N(\B_2)$ can uniquely be written as $H=H'\cup M_1\cup M_2$ or $H=H'\cup M_1\cup M_2\cup \{b\}$ for some $H'\subset \B_1$ such that $H'\cap F_{\B_1}(b)=\emptyset, S\cup H'\in N(\B_1)$
, $M_1\subset S\cap F_{\B_1}(b)$, $M_2\subsetneq F_{\B_1}(b)\setminus S$, and for any such $H',M_1$ and $M_2$ we have that $S\cup H'\cup M_1\cup M_2,S\cup H'\cup M_1\cup M_2\cup\{b\}\in N(\B_2)$. 
In other words, once we fix $H'$ as above we can let $M_1$ vary in $B_{S\cap F_{\B_1}(b)}$, and we can let $M_2$ vary in $\mathring{B}_{F_{\B_1}(b)\setminus S}$, the lattice with its top element removed.

The proof will consist of showing that for all $H'$ as above we have
$$\abs{F_{\B_1}(b)\setminus S}\sum_{\substack{M_1\subset S\cap F_{\B_1}(b)\\\substack{M_2}\subsetneq F_{\B_1}(b)\setminus S}}C^{S}_{H'\cup M_1\cup M_2}+C^S_{H'\cup M\cup M_1\cup M_2\cup\{b\}}=\sum_{M\subset F_{\B_1}(b)} C^{S'}_{H'\cup M},$$
from which the result follows, since by the remarks above
$$\chi_{\B_1}(S')=\sum_{H'}\sum_{M\subset F_{\B_1}(b)} C^{S'}_{H'\cup M},\quad \chi_{\B_2}(S)=\sum_{H'}\sum_{\substack{M_1\subset S\cap F_{\B_1}(b)\\\substack{M_2}\subsetneq F_{\B_1}(b)\setminus S}}C^S_{H'\cup M_1\cup M_2}+C^S_{H'\cup M_1\cup M_2\cup\{b\}}.$$
Throughout we use a combination of various applications of Lemma $4$ and considering the expressions involved as generating functions in the $\{d_{H'_{<A},A}\}_{A\in \B}$.
We start by analysing $\sum_{M\subset F_{\B_1}(b)} C^{S'}_{H'\cup M}$. We have:
\begin{align*}
\sum_{M\subset F_{\B_1}(b)} C^{S'}_{H'\cup M}&\stackrel{\text{Lemma }\ref{dprop1}(3)}{=}\prod_{A\in H'}(d_{H'_{<A},A}^{S'} -1)\smashoperator[lr]{\prod_{F\in F_{\B_1}(b)}}d_{H'_{<F},F}^{S'}.
\end{align*}
Similarly we have
\begin{align*}
C^{S}_{H'\cup M_1\cup M_2}\stackrel{\text{Lemma }\ref{dprop1}(1),(4)}{=}\prod_{A\in H'\cup M_1\cup M_2}(d_{H'_{<A},A}^{S'}-1),
\end{align*}
and
\begin{align*}
C^{S}_{H'\cup M_1\cup M_2\cup\{b\}}\stackrel{\text{Lemma} \ref{dprop1}(1)(2)(4)(5)}{=}\left(\sum_{A\in F_{\B_1}(b)\setminus S\cup M_2}d^{S'}_{H'_{<A},A}-1\right)\prod_{A\in H'\cup M_1\cup M_2}(d_{H'_{<A},A}^{S'}-1).
\end{align*}
Putting these expressions together we find
$$C^{S}_{H'\cup M}+C^S_{H'\cup M\cup\{b\}}=\smashoperator[r]{\sum_{A\in F_{\B_1}(b)\setminus S\cup M_2}}d^S_{H'_{<A},A}\smashoperator[r]{\prod_{A\in H'\cup M_1\cup M_2}}(d_{H'_{<A},A}^{S'}-1),$$
so that:
\begin{align*}
 \smashoperator[lr]{\sum_{\substack{M_1\subset S\cap F_{\B_1}(b)\\\substack{M_2}\subsetneq F_{\B_1}(b)\setminus S}}}C^{S}_{H'\cup M}+C^S_{H'\cup M\cup\{b\}} &= \smashoperator[r]{\sum_{\substack{M_1\subset S\cap F_{\B_1}(b)\\M_2\subsetneq F_{\B_1}(b)\setminus S\\C\in F_{\B_1}(b)\setminus S\cup M_2}}}
d^{S'}_{H'_{<C},C}\smashoperator[r]{\prod_{A\in H'\cup M_1\cup M_2}}(d_{H'_{<A},A}^{S'}-1).
\end{align*}
Hence to complete the proof we have to show that
$$\abs{F_{\B_1}(b)\setminus S}\prod_{F\in F_{\B_1}(b)}d_{H'_{<F},F}^{S'}=\sum_{\substack{M_1\subset S\cap F_{\B_1}(b)\\M_2\subsetneq F_{\B_1}(b)\setminus S\\C\in F_{\B_1}(b)\setminus S\cup M_2}}
d^{S'}_{H'_{<C},C}\prod_{A\in M_1\cup M_2}(d_{H'_{<A},A}^{S'}-1),$$
which follows again by comparing coefficients.
\end{proof}
We now prove the last case:
\begin{lemma}
\label{case3}
 Let $\L$ be a ranked atomic lattice and $\B_1,\B_2=\B_1\cup\{b\}\subset \L$ building sets. 
 Let $S\in N(\B_2)$ be such that $b\not\in S$ and $S\cup F_{\B_1}(b)\in N(\B_1)$. 
 Then $\chi_{\B_2}^{arr}(S)=\chi_{\B_1}^{arr}(S)+\chi_{\B_1}^{arr}(S\cup F_{\B_1}(b))(\abs{F_{\B_1}(b)\setminus S}-1)$.
\end{lemma}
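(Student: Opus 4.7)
The approach is to mirror the proof of Lemma \ref{case2}, adapting it to the case $b \notin S$. Throughout let $S' = S \cup F_{\B_1}(b)$ and $X = F_{\B_1}(b) \setminus S$; note $X \neq \emptyset$ because $S \in N(\B_2)$ forces $F_{\B_1}(b) \not\subset S$ by condition $(1)$ of Lemma \ref{Lemma:nesteds}.

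First I would partition $\chi_{\B_2}^{arr}(S) = \sum_{H : S \cup H \in N(\B_2)} C^S_H$ according to whether $b \in H$. Every such $H$ decomposes uniquely as $H' \cup M$ (Subcase A, $b \notin H$) or $H' \cup M \cup \{b\}$ (Subcase B), where $H' \cap F_{\B_1}(b) = \emptyset$ and $M \subset F_{\B_1}(b)$. Writing $M = M_1 \sqcup M_2$ with $M_1 \subset S \cap F_{\B_1}(b)$ and $M_2 \subset X$, Lemma \ref{Lemma:nesteds} yields: Subcase A requires $S \cup H' \cup M \in N(\B_1)$ and $M_2 \subsetneq X$; Subcase B requires $S' \cup H' \in N(\B_1)$ (which subsumes the nestedness of $S \cup H' \cup M$, since subsets of nested sets are nested) together with $M_2 \subsetneq X$.

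Next I would prove analogs of Lemma \ref{dprop1} for the case $b \notin S$, using the same building-set dichotomy ``$A > F \Leftrightarrow A > b$'' for $A \in H' \cup F_{\B_1}(b)$ and $F \in F_{\B_1}(b)$. Writing $d_F := d^{S'}_{H'_{<F}, F}$ for $F \in F_{\B_1}(b)$, the key identities are: (i) $d^S_{H'_{<A}, A} = d^{S'}_{H'_{<A}, A}$ for $A \in H' \cup F_{\B_1}(b)$ with $A \not> b$, because then $S'_{<A} = S_{<A}$ since the extra elements of $F_{\B_1}(b) \subset S'$ are incomparable to $A$; (ii) $d^S_{(H' \cup M)_{<A}, A}$ and $d^S_{(H' \cup M \cup \{b\})_{<A}, A}$ both equal $d^S_{H'_{<A}, A}$, because elements of $M$ and $\{b\}$ below $A$ are dominated by elements already accessible; and (iii) the pivotal $d^S_{(H' \cup M)_{<b}, b} = \sum_{F \in X \setminus M_2} d_F$, which follows from $[\hat 0, b] \cong \prod_{F \in F_{\B_1}(b)}[\hat 0, F]$ by splitting atoms across coordinates. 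The case $A \in H'$ with $A > b$ requires a separate building-set argument: using the decomposition of $[\hat 0, A]$ via $F_{\B_1}(A)$ and the fact that every $F \in F_{\B_1}(b)$ embeds into this product, one shows $b$ is already in $\bigvee(H'_{<A} \cup S_{<A})$ whenever the configuration is nested, restoring (i) and (ii). I expect this to be the main obstacle.

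With these in hand, expand $C^S_{H' \cup M} = C^{S'}_{H'}\prod_{F \in M}(d_F - 1)$ and $C^S_{H' \cup M \cup \{b\}} = \bigl(\sum_{F \in X \setminus M_2}d_F - 1\bigr)C^{S'}_{H'}\prod_{F \in M}(d_F - 1)$. Summing Subcase A and rewriting it as $\chi_{\B_1}^{arr}(S)$ minus the omitted terms at $M_2 = X$ (which force $S \cup H' \cup M_1 \cup X = S' \cup H' \cup M_1$ to be nested, hence $S' \cup H' \in N(\B_1)$ automatically), the identity reduces, for each fixed $H'$ with $S' \cup H' \in N(\B_1)$ after summing $M_1 \subset S \cap F_{\B_1}(b)$ freely (which contributes a factor $\prod_{F \in S \cap F_{\B_1}(b)} d_F$ to both sides), to the polynomial identity
\begin{equation*}
\sum_{M_2 \subsetneq X}\Bigl(\sum_{F \in X \setminus M_2}d_F - 1\Bigr)\prod_{F \in M_2}(d_F - 1) - \prod_{F \in X}(d_F - 1) = (|X| - 1)\prod_{F \in X} d_F.
\end{equation*}
This can be verified directly: rewriting the first sum as $\sum_{F \in X} d_F \sum_{M_2 \not\ni F}\prod_{F' \in M_2}(d_{F'} - 1) = \sum_{F \in X} d_F \prod_{F' \neq F}d_{F'} = |X|\prod d_F$, then subtracting $\sum_{M_2 \subsetneq X}\prod(d_F - 1) = \prod d_F - \prod(d_F - 1)$ and finally $\prod(d_F - 1)$ telescopes to $(|X| - 1)\prod d_F$, completing the proof.
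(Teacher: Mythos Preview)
Your approach is essentially the paper's: fix $H'$ disjoint from $F_{\B_1}(b)$, decompose each admissible $H$ as $H'\cup M$ or $H'\cup M\cup\{b\}$, convert the relevant $C^S$-terms into products of the $d_F=d^{S'}_{H'_{<F},F}$ via the analogues of Lemma~\ref{dprop1}, and reduce to the same polynomial identity in the $d_F$ (the paper's final displayed equation is your identity after moving $\prod_{F\in X}(d_F-1)$ to the other side). The paper makes the split ``$S'\cup H'\in N(\B_1)$ or not'' explicit as Case~1/Case~2; you absorb Case~1 into the observation that Subcase~B and the omitted $M_2=X$ terms are both vacuous there.

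One correction: the ``main obstacle'' you flag is a phantom, and your proposed fix for it is wrong. The claim that $b\leq\bigvee(H'_{<A}\cup S_{<A})$ whenever $A\in H'$ with $A>b$ fails already for $H'=\{A\}$, $S=\emptyset$. But you never need identity~(i) at such $A$. The factorisation $C^S_{H'\cup M}=C^{S'}_{H'}\prod_{F\in M}(d_F-1)$ is only used for the omitted $M_2=X$ terms, where $(H'\cup M\cup S)_{<A}$ already contains all of $F_{\B_1}(b)$ once $A>b$, and for Subcase~B, where $b$ itself sits in the $H$-part so $(H'\cup M\cup\{b\})_{<A}\ni b$ once $A>b$; in both cases the join with $S_{<A}$ dominates $b$ trivially and the conversion to $d^{S'}$ goes through. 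Equivalently, for Subcase~B just invoke Lemma~\ref{dprop1} verbatim with $S\cup\{b\}$ playing the role of its $S$ (noting $d^S_{(H'\cup M\cup\{b\})_{<A},A}=d^{S\cup\{b\}}_{(H'\cup M\cup\{b\})_{<A},A}$ since $b$ is already in the left subscript), which is effectively what the paper does.
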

\begin{proof}
We use the same general idea as in Lemma \ref{case2}. We again let $H'\subset \B_1$ denote a set such that $H'\cap F_{\B_1}(b)=\emptyset$ and $S\cup H'\in N(\B_2)$,
which means that also $S\cup H'\in N(\B_1)$. All other nested sets $H$ such that $H\cup S\in N(\B_2)$ can be obtained from $H'=H\setminus F_{\B_1}(b)\setminus\{b\}$ 
by adding some subset of $F_{\B_1}(b)$ and adding $b$. So we are going to show that:
\begin{align*}
\sum_{\substack{M\subset F_{\B_1}(b)\\ H'\cup M\cup S\in N(\B_2)}}&C_{H\cup M}^S+\sum_{\substack{M\subset F_{\B_1}(b)\\ H'\cup M\cup\{b\}\cup S\in N(\B_2)}}C_{H\cup M\cup\{b\}}^S\\
&=\sum_{\substack{M\subset F_{\B_1}(b)\\ H'\cup M\cup S\in N(\B_1)}} C_{H\cup M}^S+ (\abs{F_{\B_1}(b)\setminus S}-1) \smashoperator[l]{\sum_{\substack{M\subset F_{\B_1}(b)\\ H'\cup M\cup S\cup F_{\B_1}(b)\in N(\B_1)}}}C_{H\cup M}^{S\cup F_{\B_1}(b)},
\end{align*}
from which the result would follow by summing over all $H'$.
Now we distinguish two cases. \\
\textbf{Case 1: $S\cup H'\cup F_{\B_1}(b)\not\in N(\B_1)$:} In this case the second sums on both sides of the equality are vacuous, and the first sums on both sides of the equality are equal, 
so the equality holds.\\
\textbf{Case 2: $S\cup H'\cup F_{\B_1}(b)\in N(\B_1)$:} We start by identifying the first summands on both sides of the equality:
$$\sum_{\substack{M\subset F_{\B_1}(b)\\ H'\cup M\cup S\in N(\B_2)}}C_{H\cup M}^S=\sum_{\substack{M_1\subset S\cap F_{\B_1}(b)\\M_2\subsetneq F_{\B_1}(b)\setminus S}}C_{H'\cup M_1\cup M_2}^S,$$
and
$$\sum_{\substack{M\subset F_{\B_1}(b)\\ H'\cup M\cup S\in N(\B_1)}} C_{H\cup M}^S=\sum_{\substack{M\subset F_{\B_1}(b)}} C_{H\cup M}^S.$$
Hence
\begin{align*}
\smashoperator[r]{\sum_{\substack{M\subset F_{\B_1}(b)\\ H'\cup M\cup S\in N(\B_1)}}} C_{H\cup M}^S-\smashoperator[r]{\sum_{\substack{M\subset F_{\B_1}(b)\\ H'\cup M\cup S\in N(\B_2)}}}C_{H\cup M}^S
\stackrel{\ref{dprop1}(3)}{=}\prod_{A\in H'\cup (F_{\B_1}(b)\setminus S)}(d_{H'_{<A},A}^{S\cup F_{\B_1}(b)}-1)\prod_{F\in S\cap F_{\B_1}(b)}d_{H'_{<F},F}^{S\cup F_{\B_1}(b)}.
\end{align*}
Now we identify the second summand on the left hand side:
\begin{align*}
\smashoperator[r]{\sum_{\substack{M\subset F_{\B_1}(b)\\ H'\cup M\cup\{b\}\cup S\in N(\B_2)}}}&C_{H\cup M\cup\{b\}}^S
\stackrel{\ref{dprop1}(1,4,5)}{=}&\smashoperator[l]{\sum_{\substack{M_1\subset S\cap F_{\B_1}(b)\\M_2\subsetneq F_{\B_1}(b)\setminus S }}}\left(\smashoperator[r]{\sum_{C\in F_{\B_1}(b)\setminus S\cup M_2}}d_{H'_{<C},C}^{S\cup F_{\B_1}(b)}-1\right)\smashoperator[r]{\prod_{A\in H'\cup M_1\cup M_2}}(d_{H'_{<A},A}^{S\cup F_{\B_1}(b)}-1).
\end{align*}
And finally the second summand on the right hand side:
\begin{align*}
 \sum_{\substack{M\subset F_{\B_1}(b)\\ H'\cup M\cup S\cup F_{\B_1}(b)\in N(\B_1)}}C_{H\cup M}^{S\cup F_{\B_1}(b)} =\prod_{A\in H'}(d_{H'_{<A},A}^{S\cup F_{\B_1}(b)}-1)\prod_{F\in F_{\B_1}(b)}d_{H'_{<F},F}^{S\cup F_{\B_1}(b)}.
\end{align*}
Putting things together we have to show that:
%\[\resizebox{1 \textwidth}{!}{\begin{minipage}{1 \textwidth}
\begin{align*}
&\sum_{\substack{M_1\subset S\cap F_{\B_1}(b)\\M_2\subsetneq F_{\B_1}(b)\setminus S }}\left(\sum_{C\in F_{\B_1}(b)\setminus S\cup M_2}d_{H'_{<C},C}^{S\cup F_{\B_1}(b)}-1\right)\prod_{A\in \cup M_1\cup M_2}(d_{(H')_{<A},A}^{S\cup F_{\B_1}(b)}-1)\\
&=\prod_{F\in S\cap F_{\B_1}(b)}d_{H'_{<F},F}^{S\cup F_{\B_1}(b)}\left( \prod_{F\in F_{\B_1}(b)\setminus S}(d_{H'_{<F},F}^{S\cup F_{\B_1}(b)}-1) +(\abs{F_{\B_1}(b)}-1)\smashoperator[l]{\prod_{F\in F_{\B_1}(b)\setminus S}}d_{H'_{<F},F}^{S\cup F_{\B_1}(b)}  \right),
\end{align*}
%\end{minipage}}\]
which follows by comparing coefficients.
\end{proof}
\begin{thm}
\label{Nf}
 $\chi_{\B}^{arr}$ is an $N$-function
\end{thm}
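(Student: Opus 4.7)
The plan is to observe that the theorem follows immediately by assembling the three case-analyses already carried out in Lemmas \ref{case1}, \ref{case2}, and \ref{case3}. By Definition \ref{def:NF}, to show that the collection $\{\chi_{\B}^{arr}\}_{\B}$ forms an $N$-function it suffices to verify, for every pair of building sets $\B_1,\B_2=\B_1\cup\{b\}\subset\L$, that the three-case recursion holds on $N(\B_2)$. Thus the proof amounts to a bookkeeping step: match each of the three branches in Definition \ref{def:NF} with the corresponding lemma.

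Concretely, I would fix $\B_1$ and $\B_2=\B_1\cup\{b\}$, pick $S\in N(\B_2)$, and split into the cases that the definition of $Bl_b$ distinguishes. Case $b\notin S$ and $S\cup F_{\B_1}(b)\notin N(\B_1)$ is precisely the content of Lemma \ref{case1}, which gives $\chi_{\B_2}^{arr}(S)=\chi_{\B_1}^{arr}(S)$. Case $b\in S$ is Lemma \ref{case2}, which yields
\[
\chi_{\B_2}^{arr}(S)=\chi_{\B_1}^{arr}\!\bigl(S\setminus\{b\}\cup F_{\B_1}(b)\bigr)\,\bigl|F_{\B_1}(b)\setminus S\bigr|.
\]
Case $b\notin S$ and $S\cup F_{\B_1}(b)\in N(\B_1)$ is Lemma \ref{case3}, giving
\[
\chi_{\B_2}^{arr}(S)=\chi_{\B_1}^{arr}(S)+\chi_{\B_1}^{arr}\!\bigl(S\cup F_{\B_1}(b)\bigr)\bigl(\bigl|F_{\B_1}(b)\setminus S\bigr|-1\bigr).
\]
These are exactly the three branches prescribed by the formula for $Bl_b\chi_{\B_1}^{arr}$, so $\chi_{\B_2}^{arr}=Bl_b\chi_{\B_1}^{arr}$, which is the defining condition for an $N$-function.

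There is essentially no remaining obstacle: the hard computational work has already been done inside the three lemmas, each of which reduces the identity on ranks of $D(\L,\B,S)$ (expanded using the monomial basis of Proposition \ref{basis}) to a polynomial identity in the quantities $d_{H'_{<A},A}^{\bullet}$ and is verified by comparing coefficients, with the auxiliary Lemma \ref{dprop1} ensuring that these numerical invariants behave compatibly when $b$ is added to $\B_1$. Therefore the only thing to check in writing up Theorem \ref{Nf} is that the three-way case split in Definition \ref{def:NF} is covered exhaustively and disjointly, which it is. No further argument (and in particular no induction over longer chains of building sets, since the unnumbered chain lemma is only needed elsewhere) is required.
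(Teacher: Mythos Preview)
Your proposal is correct and matches the paper's own proof exactly: the paper simply states that the theorem follows from the definition together with Lemmas \ref{case1}, \ref{case2}, and \ref{case3}. There is nothing more to add.
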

\begin{proof}
 Follows from the definition and Lemma's \ref{case1},\ref{case2} and \ref{case3}.
\end{proof}

\section{A second $N$-function}
In this section we apply remarks made at the end of \cite{2003icr} to obtain a second $N$-function.
Let $\Delta$ be a smooth fan and denote by $X(\Delta)$ the corresponding toric variety. 
We consider the face poset $P(\Delta)$ of $\Delta$, which consists of all closed cones of $\Delta$ ordered by inclusion.
Let $\B\subset P(\Delta)$ be a building set. Denote by $\Delta_{\B}$ the fan obtained from $\Delta$ by performing stellar subdivisions in the elements of $\B$. It is shown in \cite{2003icr} that the nested set complex $N(\B)$coincides with with the face poset of the fan $\Delta_{\B}$: $N(\B)=P(\Delta_{\B})$. 
Cones in $\Delta$ correspond to closed torus orbit in $X(\Delta)$. The face poset $P(\Delta)$ coincides with the intersection poset of the maximal torus orbit stratification of $X(\Delta)$. 
These maximal torus orbits intersection transversely and a stellar subdivision of the fan in $\tau\in \B$
corresponds to a blowup in the closed torus orbit corresponding to $\tau$. 
Taking all this into consideration we expect that the following is an $N$-function:
$$\chi^{tor'}_{\B}:N(\B)\to \mathbb{Z}:\tau\mapsto \chi(V(\tau)),$$
where $V(S)$ is the closed torus orbit corresponding to $\tau\in \Delta_{\B}=N(\B)$. 
It is well known (see e.g. \cite{IntroToricFulton}) that
$$\chi(V(\tau))=\text{number of maximal dimensional cones of }\Delta\text{ containing }\tau,$$
so we can define the combinatorial analogue of $\chi^{tor'}$ as
$$\chi_{\B}^{tor}:N(\B)\to \mathbb{N}: S\mapsto \abs{\max N(\B)_{\geq S}}.$$
We prove that this is indeed an $N$-function.
\begin{thm}
 Let $\L$ be a meet-semilattice, and let for any building set $\B\subset L$
 $$\chi_{\B}:N(\B)\to \mathbb{N}: S\mapsto \abs{\max N(\B)_{\geq S}}$$
 Then $\chi$ is an $N$-function.
\end{thm}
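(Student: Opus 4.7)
The plan is to verify the three defining relations of an $N$-function (Definition \ref{def:NF}) directly, by counting $|\max N(\B_2)_{\geq S}|$ for each $S \in N(\B_2)$. Throughout fix building sets $\B_1, \B_2 = \B_1 \cup \{b\}$ and write $F = F_{\B_1}(b)$. The key tool is Lemma \ref{Lemma:nesteds}, which describes $N(\B_2)$ combinatorially in terms of $N(\B_1)$. Any maximal $T \supseteq S$ in $N(\B_2)$ splits into two types according to whether $b \in T$ or $b \notin T$.

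For the case $b \in T$, Lemma \ref{Lemma:nesteds} lets me write $T = H \cup M \cup \{b\}$ with $H \cap F = \emptyset$, $M \subsetneq F$, and $T' := H \cup F \in N(\B_1)$. Setting $S' := (S \setminus \{b\}) \cup F$, the inclusion $T \supseteq S$ translates to $T' \supseteq S'$ together with $M \supseteq S \cap F$. I claim that $T$ is maximal in $N(\B_2)_{\geq S}$ if and only if $|M| = |F| - 1$ and $T'$ is maximal in $N(\B_1)_{\geq S'}$. The first condition is needed because otherwise one could enlarge $M$ by another element of $F$ while preserving the conditions of Lemma \ref{Lemma:nesteds}. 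For the second, any $x \in \B_1 \setminus T'$ can be added to $T$ within $N(\B_2)$ if and only if it can be added to $T'$ within $N(\B_1)$, as one checks directly from conditions $(2)$ and $(3)$ of Lemma \ref{Lemma:nesteds}. The unique $f \in F \setminus M$ is forced into $F \setminus S$ (by $M \supseteq S \cap F$), yielding a bijection
\[
\{T \in \max N(\B_2)_{\geq S} : b \in T\} \;\longleftrightarrow\; \max N(\B_1)_{\geq S'} \times (F \setminus S), \qquad T \longleftrightarrow (T', f),
\]
with inverse $(T', f) \mapsto (T' \setminus \{f\}) \cup \{b\}$.

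For the case $b \notin T$, we have $T \in N(\B_1)$ with $F \not\subset T$. The key observation is that $T \in \max N(\B_1)_{\geq S}$ with $F \not\subset T$ implies $T \cup F \notin N(\B_1)$ (otherwise any missing $f \in F \setminus T$ would enlarge $T$ in $N(\B_1)$), hence $T \cup \{b\} \notin N(\B_2)$, so $T$ remains maximal in $N(\B_2)_{\geq S}$; a symmetric check establishes the converse. Subtracting off those $T \in \max N(\B_1)_{\geq S}$ that contain $F$ (which, when $S \cup F \in N(\B_1)$, are precisely $\max N(\B_1)_{\geq S \cup F}$, and otherwise are vacuous), we obtain
\[
|\{T \in \max N(\B_2)_{\geq S} : b \notin T\}| = \chi_{\B_1}(S) - \chi_{\B_1}(S \cup F),
\]
with the convention $\chi_{\B_1}(S \cup F) = 0$ when $S \cup F \notin N(\B_1)$.

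Combining the two counts in each of the three cases of Definition \ref{def:NF} reproduces the required formulae: in Case 1 ($b \notin S$, $S \cup F \notin N(\B_1)$) the $b \in T$ contribution is empty, because $T' \supseteq S \cup F \in N(\B_1)$ would contradict the assumption, leaving $\chi_{\B_2}(S) = \chi_{\B_1}(S)$; in Case 2 ($b \notin S$, $S \cup F \in N(\B_1)$) we get $\chi_{\B_1}(S) - \chi_{\B_1}(S \cup F) + |F \setminus S|\chi_{\B_1}(S \cup F)$, which simplifies to the required $\chi_{\B_1}(S) + (|F \setminus S| - 1)\chi_{\B_1}(S \cup F)$; in Case 3 ($b \in S$) every maximal $T \supseteq S$ contains $b$ automatically, giving $|F \setminus S|\chi_{\B_1}(S')$ exactly. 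The main obstacle is the careful verification of the bijection in the $b \in T$ case, specifically showing that maximality of $T$ in $N(\B_2)$ decomposes cleanly into maximality of $T'$ in $N(\B_1)$ together with the combinatorial constraint $|M| = |F| - 1$; once this structural fact is in hand, the rest is routine bookkeeping.
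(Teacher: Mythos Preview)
Your proposal is correct and follows essentially the same approach as the paper: both arguments use Lemma~\ref{Lemma:nesteds} to classify maximal $T\in N(\B_2)_{\geq S}$ according to whether $b\in T$, establish the bijection $T\leftrightarrow (T',f)$ with $T'=(T\setminus\{b\})\cup F_{\B_1}(b)$ in the $b\in T$ case, and count the $b\notin T$ contribution as $\chi_{\B_1}(S)-\chi_{\B_1}(S\cup F_{\B_1}(b))$. Your organization differs only cosmetically, treating the $b\in T$/$b\notin T$ dichotomy uniformly before specializing to the three cases of Definition~\ref{def:NF}, whereas the paper runs through the three cases directly; the underlying bijections and verifications are identical.
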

\begin{proof}
 Let $\B_1,\B_2\cup \{b\}\subset \P$ be two building sets for $\P$. Let $S\in N(\B_2)$, $b\not\in S$ and $S\cup F_{\B_1}(b)\not\in N(\B_1)$.
 Then $N(\B_1)_{\geq S}=N(\B_2)_{\geq S}$, which means that $\chi^{tor}_{\B_1}(S)=\chi^{tor}_{\B_2}(S)$, as we wanted to show.
 
 Now let $S\in N(\B_2)$, $b\not\in S$, $S\cup F_{\B_1}(b)\in N(\B_1)$.
 Let $T\in \max N(\B_1)_{\geq S}$. Then $T\in N(\B_2)$ if and only if $F_{\B_1}(b)\not\subset T$.
 In this case $T$ is also maximal in $N(\B_2)$. To see this suppose $T\cup \{t\}\in N(\B_2)$.
 By Lemma \ref{Lemma:nesteds}, $T\cup \{t\}\setminus \{b\}\in N(\B_1)$. But $T$ was maximal in $N(\B_1)$,
 so $t=b$. But then again by Lemma \ref{Lemma:nesteds} $T\cup F_{\B_1}(b)\in N(\B_1)$. But since $T$ is maximal
 this means that $F_{\B_1}(b)\subset T$, contradicting the definition of $T$.
 We conclude that we already have a contribution of $\chi_{\B_1}^{tor}(S)-\chi_{\B_1}^{tor}(S\cup F_{\B_1}(b))$ to
 $\chi_{\B_2}^{tor}(S)$. 
 If $T\not\in \max N(\B_2)_{\geq S}$, i.e. $F_{\B_1}(b)\subset T$. Then $T\cup \{b\}\in N(\B_2)$, and hence also $T\cup\{b\}\cup F_{\B_1}(b)\setminus \{F\}\in N(\B_2)$
 for all $F\in F_{\B_1}(b)$. Now $S\subset T\cup\{b\}\cup F_{\B_1}(b)\setminus \{F\}$ if and only if $F\in F_{\B_1}\setminus S$. Hence to every $T$ of this form correspond
 $\abs{F_{\B_1}(b)\setminus S}$ maximal elements in $N(\B_2)$. 
 Remains to show that every $T\in \max N(\B_2)_{\geq S}$ is of this form, but this is obvious. We conclude that
 $$\chi_{\B_2}^{tor}(S)=\chi_{\B_1}^{tor}(S)-\chi_{\B_1}^{tor}(S\cup F_{\B_1}(b))+\abs{F_{\B_1}(b)\setminus S}\chi_{\B_1}^{tor}(S\cup F_{\B_1}(b)),$$
 which is what we wanted to show.
 
 Finally let $S\in N(\B_2)$, $b\in S$. Then it follows again from Theorem \ref{Lemma:nesteds} that every $T\in \max N(\B_2)_{\geq S}$ is of the form
 $T=T'\cup \{b\}\setminus \{F\}$ for $T'=T\setminus \{b\}\cup F_{\B_1}(b)\in N(\B_1)$, $F\in F_{\B_1}\setminus S$. This $T'$ will then be in $\max N(\B_1)_{\geq S\setminus\{b\}\cup F_{\B_1}(b)}$,
 and for any such $T'$ the sets $T'\cup \{b\}\setminus \{F\}$ are maximal in $N(\B_2)_{\geq S}$. Hence
 $$\chi_{\B_2}^{tor}(S)=\abs{F_{\B_1}(b)\setminus S}\chi_{\B_1}^{tor}(S\setminus\{b\}\cup F_{\B_1}(b)),$$
 which is what we wanted to prove.
 \end{proof}
\section{Results, Remarks, Questions and Conjectures}
\subsection{Multiplicativity under direct sums}
\begin{thm}
\label{mult}
Let $M_1,M_2$ be matroids. Then $Z_{F(M_1\oplus M_2)}^{\chi^{arr},\alpha}=Z_{F(M_1)}^{\chi^{arr},\alpha|_{M_1}}Z_{F(M_2)}^{\chi^{arr},\alpha|_{M_2}}$.
\end{thm}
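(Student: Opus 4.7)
The plan is to exploit the independence of $Z_{\L}^{\B,\chi,\alpha}$ from the choice of building set, guaranteed by Theorem~\ref{Z}, and compute everything with respect to the minimal building set. I would begin by identifying $F(M_1 \oplus M_2) \cong F(M_1) \times F(M_2)$ and observing that every interval $[\hat 0, (p_1, p_2)]$ in the product splits as $[\hat 0, p_1] \times [\hat 0, p_2]$. This means an element $(p_1, p_2)$ is irreducible only when one coordinate is $\hat 0$, so
\[
\mathrm{Irr}(F(M_1) \times F(M_2)) = \mathrm{Irr}_1 \sqcup \mathrm{Irr}_2,
\]
under the natural embeddings $x \mapsto (x, \hat 0)$ and $y \mapsto (\hat 0, y)$.

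Next I would show that the nested set complex splits as a product: $S \in N(\mathrm{Irr}_1 \sqcup \mathrm{Irr}_2)$ if and only if $S = S_1 \sqcup S_2$ with $S_i \in N(\mathrm{Irr}_i)$. The forward direction restricts the nestedness condition to subsets lying in a single factor. For the reverse, if $T \subset S_1 \sqcup S_2$ is pairwise incomparable with $\abs{T} \geq 2$ and meets both factors, then $\bigvee T$ has both coordinates nonzero and is therefore not irreducible, hence not in the building set; if $T$ lies in one factor, the nestedness of $S_i$ handles it directly.

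The technical heart of the argument is to establish the tensor product decomposition
\[
D(F(M_1) \times F(M_2),\, \mathrm{Irr}_1 \sqcup \mathrm{Irr}_2,\, S_1 \sqcup S_2) \cong D(F(M_1), \mathrm{Irr}_1, S_1) \otimes_{\mathbb{Z}} D(F(M_2), \mathrm{Irr}_2, S_2).
\]
For $B = (b_1, \hat 0) \in \mathrm{Irr}_1$, the requirement $B > h$ forces $H \subset \mathrm{Irr}_1$, and any $C \geq B$ in the building set must also lie in $\mathrm{Irr}_1$ (since $C \in \mathrm{Irr}_2$ would force $\hat 0 \geq b_1$). Moreover $S_{<B} = (S_1)_{<b_1}$, and any atoms needed to write $B$ as a join lie entirely in the first factor, so $d^S_{H,B}$ coincides with the quantity computed inside $(F(M_1), \mathrm{Irr}_1, S_1)$. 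Hence the generators $P^I_{H,B}$ of the defining ideal split cleanly between the two factors and the tensor decomposition follows. This gives multiplicativity $\chi^{arr}(S_1 \sqcup S_2) = \chi^{arr}(S_1)\chi^{arr}(S_2)$, and then Möbius inversion on the product poset yields $\chi^{arr,\circ}(S_1 \sqcup S_2) = \chi^{arr,\circ}(S_1)\chi^{arr,\circ}(S_2)$.

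To conclude, the product $\prod_{A \in S_1 \sqcup S_2}\alpha(A)^{-1}$ factors as $\prod_{A \in S_1}\alpha|_{M_1}(A)^{-1}\cdot\prod_{A \in S_2}\alpha|_{M_2}(A)^{-1}$ because every element of $\mathrm{Irr}_1 \sqcup \mathrm{Irr}_2$ lies in exactly one factor. Summing over pairs $(S_1, S_2) \in N(\mathrm{Irr}_1) \times N(\mathrm{Irr}_2)$ then factors the zeta function as a product, yielding the claim. The main obstacle is the algebra decomposition: verifying that the relations $P^I_{H,B}$ involve variables and join–data strictly from a single factor requires the careful case analysis of $d^S_{H,B}$ across the product; the remaining steps are straightforward consequences of the product structure on lattices of flats together with Theorem~\ref{Z}.
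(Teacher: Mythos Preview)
Your proposal is correct and follows essentially the same route as the paper: identify $F(M_1\oplus M_2)\cong F(M_1)\times F(M_2)$, take a building set that is the disjoint union of building sets on the factors so that $N(\B)\cong N(\B_1)\times N(\B_2)$, and show that the algebras $D(\L,\B,S)$ split multiplicatively because for $B$ in one factor the data $H_{<B}$, $S_{<B}$, $\{C\geq B\}$ and hence $d^S_{H,B}$ are computed entirely in that factor. The paper allows an arbitrary pair of building sets $\B_i$ and phrases the key step as a dimension count via Proposition~\ref{basis} rather than as a tensor decomposition of the defining ideal, but this is a cosmetic difference.
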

\begin{proof}
Note that $F(M_1\oplus M_2)\cong F(M_1)\times F(M_2)$, and that if $\B_i\subset F(M_i)$ are building sets, then $\B:=\B_i\times \{\hat 0\}\cup \{\hat 0\}\times\B_j$ is a building set for $F(M_1)\times F(M_2)$. Moreover, $N(\B)=N(\B_1)\times N(\B_2)$. It follows that all we need to proof is that
$$\dim D(F(M_1)\times F(M_2), \B, S)=\dim D(F(M_1), \B_1, S\cap \B_1) \cdot \dim D(F(M_2), \B_2, S\cap \B_2).$$
By Proposition \ref{basis} we need only show that for $A\in \B_i$ we have $d^S_{H_{<A},A}=d^{S\cap \B_i}_{(H\cap \B_i)_{<A},A}$, where the first term is computed in $F(M_1)\times F(M_2)$ and the second is computed in $F(M_i)$. But this is clear since $H_{<A}=(H\cap \B_i)_{<A}$ and we will only add atoms of $F(M_i)$ to $(S\cup H)_{<A}$ in order to get the join to equal $A$.
\end{proof}
We were not able to characterise the relation between e.g. $Z^{\chi^{arr},\alpha}_{M}$ and $Z^{\chi^{arr},\alpha}_{M^*}$, nor for other common matroid operations.
\subsection{Expressions for small-rank matroids}
By direct computations we find the following expressions for the combinatorial zeta function. These expression for $\chi^{arr}$ were found before in \cite{JLMS:JLMS0631}. Here $\L^r_m$ denotes the set of rank $r$ elements in $\L$ that larger greater or equal then exactly $m$ atoms of $\L$. Throughout we take
$$\alpha: x\mapsto \abs{A(F(M))_{\leq x}}s+rk_{F(M)}(x).$$
\begin{prop}
Let $\mathcal{L}$ be a ranked atomic atomic lattice of rank $2$. Then $$Z^{\chi^{arr},\alpha}_{\L}=\alpha(\hat 1)^{-1}\left(2-n+\sum_{A\in A(\L)}\alpha(A)^{-1}\right).$$
\end{prop}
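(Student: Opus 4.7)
The plan is to invoke Theorem \ref{Z} to reduce to the most convenient building set, namely $\B = A(\L)\cup\{\hat{1}\}$, and then read off everything from Proposition \ref{basis} and Lemma \ref{lemma:zeta}. Writing $n=\abs{A(\L)}$, one has $\alpha(A)=s+1$ for each atom $A$ and $\alpha(\hat{1})=ns+2$, so all of $\alpha(A)-1$ and $\alpha(\hat{1})-1$ that appear in Lemma \ref{lemma:zeta} become very simple polynomials.

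Step 1 is to enumerate $N(\B)$. Since in a rank $2$ lattice any two distinct atoms are incomparable and join to $\hat{1}\in\B$, no nested set may contain two distinct atoms, while $\{A,\hat{1}\}$ is automatically nested (no incomparable pair). Hence
$$N(\B)=\{\emptyset\}\cup\bigl\{\{A\}:A\in A(\L)\bigr\}\cup\bigl\{\{\hat{1}\}\bigr\}\cup\bigl\{\{A,\hat{1}\}:A\in A(\L)\bigr\}.$$

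Step 2 is to compute $\chi^{arr}_\B(S)=\sum_{H:S\cup H\in N(\B)}C_H^S$ via Proposition \ref{basis}. For any atom $A$ one has $d^S_{H_{<A},A}=1$ (an atom is attained by itself), so every $H$ containing an atom contributes zero; only $H=\emptyset$ and $H=\{\hat{1}\}$ survive. Moreover $d^S_{\emptyset,\hat{1}}=2$ when $S\cap A(\L)=\emptyset$ (two atoms are needed), and $d^S_{\emptyset,\hat{1}}=1$ as soon as $S$ already contains an atom. This gives
$$\chi^{arr}_\B(\emptyset)=\chi^{arr}_\B(\{\hat{1}\})=2,\qquad \chi^{arr}_\B(\{A\})=\chi^{arr}_\B(\{A,\hat{1}\})=1.$$

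Step 3 is algebraic. Lemma \ref{lemma:zeta} now yields
$$Z^{\chi^{arr},\alpha}_\L = 2-\frac{ns}{s+1}-\frac{2(ns+1)}{ns+2}+\frac{ns(ns+1)}{(s+1)(ns+2)}.$$
Putting everything over the common denominator $(s+1)(ns+2)$, the numerator collapses to $(2-n)s+2=(2-n)(s+1)+n$, so
$$Z^{\chi^{arr},\alpha}_\L=\frac{(2-n)(s+1)+n}{(s+1)(ns+2)}=\alpha(\hat{1})^{-1}\!\left(2-n+\frac{n}{s+1}\right)=\alpha(\hat{1})^{-1}\!\left(2-n+\sum_{A\in A(\L)}\alpha(A)^{-1}\right),$$
as claimed. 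No step is a serious obstacle; the only care needed is the case split in the value of $d^S_{\emptyset,\hat{1}}$ depending on whether $S$ already contains an atom, everything else being a routine polynomial manipulation.
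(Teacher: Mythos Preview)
Your proof is correct and is precisely the kind of direct computation the paper has in mind; the paper states the formula as obtained ``by direct computations'' without spelling out a proof, and your argument via the maximal building set $\B=\L\setminus\{\hat 0\}$, Proposition \ref{basis}, and Lemma \ref{lemma:zeta} is the natural way to carry this out.
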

\begin{prop}
\label{rank3}
 Let $\L$ be a ranked atomic lattice of rank $3$. Let $\alpha:\L\to \mathbb{F}^\times$ be a $B$-function that is constant on $\L^1$ and on all $\L^2_m$. 
 We denote by $\alpha(a)$ the constant value taken by $\alpha$ on $\L^1$ and by $\alpha(b_m)$ the constant value taken by $\alpha$ on $\L^2_m$.
 Then
\[\resizebox{1 \textwidth}{!}{\begin{minipage}{1\textwidth} \begin{align*}
 Z_{\L}^{\chi^{arr},\alpha}=\frac{1}{\alpha(\hat 1)}\left(3-2\abs{\L^1}+\sum_{m}\abs{\L^2_m}(m-1) + \frac{2\abs{\L^1}-\sum_{m}\abs{\L^2_m}m}{\alpha(a)}+\sum_{m}\frac{\abs{\L^2_m}}{\alpha(b_m)}\left(2-m+\frac{m}{\alpha(a)}\right)   \right).
 \end{align*}\end{minipage}}\]
\end{prop}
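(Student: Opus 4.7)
The plan is to apply Theorem \ref{Z} and work with a convenient building set. Take $\B=\L\setminus\{\hat 0\}$, the maximal building set. Since $\L$ is atomic, any antichain of size at least two in $\B$ has a nonzero join lying in $\B$, so the nested subsets of $\B$ are exactly the chains of $\L\setminus\{\hat 0\}$. These chains come in eight types -- $\emptyset$, $\{a\}$, $\{b\}$, $\{\hat 1\}$, $\{a,\hat 1\}$, $\{b,\hat 1\}$, $\{a,b\}$ with $a<b$, and $\{a,b,\hat 1\}$ -- with multiplicities $1$, $|\L^1|$, $\sum_m|\L^2_m|$, $1$, $|\L^1|$, $\sum_m|\L^2_m|$, $\sum_m m|\L^2_m|$, $\sum_m m|\L^2_m|$ respectively.

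For each chain $S$, compute $\chi^{arr}_\B(S)=\rk D(\L,\B,S)$ via Proposition \ref{basis}. Since any atom $A$ forces $d^S_{\emptyset,A}=1$, atom variables contribute nothing, so it suffices to consider $H\subset\L^2\cup\{\hat 1\}$ forming a chain with $S$. A short rank argument gives $d^S_{\emptyset,b}\in\{1,2\}$ according as an atom of $S$ lies below $b$ or not, and $d^S_{\emptyset,\hat 1}\in\{1,2,3\}$ according to the top rank present in $S_{<\hat 1}$. Crucially, Proposition \ref{basis} allows $H$ to overlap $S$: when $\hat 1\in S\cup H$ and no element of $S$ lies strictly between $\hat 0$ and $\hat 1$, the monomials $x_{\hat 1},x_{\hat 1}^2$ still enter the basis and must be counted. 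Running the eight cases produces
\[
\chi^{arr}(\emptyset)=\chi^{arr}(\{\hat 1\})=3+\sum_m|\L^2_m|,
\]
\[
\chi^{arr}(\{a\})=\chi^{arr}(\{b\})=\chi^{arr}(\{a,\hat 1\})=\chi^{arr}(\{b,\hat 1\})=2,\qquad \chi^{arr}(\{a,b\})=\chi^{arr}(\{a,b,\hat 1\})=1,
\]
for any atom $a$ and any rank-$2$ element $b$ (independently of which $\L^2_m$ it belongs to).

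The key consequence is $\chi^{arr}(S)=\chi^{arr}(S\cup\{\hat 1\})$ whenever $\hat 1\notin S$. Pairing up such an $S$ with $S\cup\{\hat 1\}$ in the formula of Lemma \ref{lemma:zeta}, and noting that $|S\cup\{\hat 1\}|=|S|+1$ produces the opposite sign, the combined contribution is
\[
\chi^{arr}(S)(-1)^{|S|}\prod_{A\in S}\frac{\alpha(A)-1}{\alpha(A)}\left(1-\frac{\alpha(\hat 1)-1}{\alpha(\hat 1)}\right)=\frac{\chi^{arr}(S)(-1)^{|S|}}{\alpha(\hat 1)}\prod_{A\in S}\frac{\alpha(A)-1}{\alpha(A)},
\]
which carries the desired $1/\alpha(\hat 1)$. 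Summing over the four chains $S\subset\L^1\cup\L^2$ and using that $\alpha$ is constant on $\L^1$ and on each $\L^2_m$, one obtains
\[
\alpha(\hat 1)\,Z_\L^{\chi^{arr},\alpha}=\Big(3+\sum_m|\L^2_m|\Big)-2|\L^1|\frac{\alpha(a)-1}{\alpha(a)}-2\sum_m|\L^2_m|\frac{\alpha(b_m)-1}{\alpha(b_m)}+\sum_m m|\L^2_m|\frac{(\alpha(a)-1)(\alpha(b_m)-1)}{\alpha(a)\alpha(b_m)}.
\]
Expanding each $(\alpha(A)-1)/\alpha(A)=1-1/\alpha(A)$ and collecting like terms according to which $1/\alpha(\cdot)$'s appear reproduces the four groups displayed in the proposition.

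The principal obstacle is the accurate computation of the eight $\chi^{arr}$-values, especially including the overlap case $H\cap S\ni\hat 1$ that contributes the powers $x_{\hat 1}^{m}$; omitting these breaks the symmetry $\chi^{arr}(S)=\chi^{arr}(S\cup\{\hat 1\})$ and leaves a spurious constant term in $Z_\L^{\chi^{arr},\alpha}$, in conflict with the claimed formula.
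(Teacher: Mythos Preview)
Your argument is correct. The paper itself offers no proof beyond the phrase ``by direct computations,'' so your write-up is in fact more detailed than what appears there; working with the maximal building set $\B=\L\setminus\{\hat 0\}$ so that $N(\B)$ consists of chains, computing the eight values of $\chi^{arr}$ via Proposition~\ref{basis}, and then pairing $S$ with $S\cup\{\hat 1\}$ in Lemma~\ref{lemma:zeta} is exactly the natural way to make that direct computation explicit.

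Two minor remarks on presentation. First, your claim that $d^S_{\emptyset,b}=2$ for a rank-$2$ element and $d^S_{\emptyset,\hat 1}=3$ relies on the fact that in a ranked atomic lattice of rank $3$ the join of any two distinct atoms has rank exactly $2$ and the join of that with any further atom not below it is $\hat 1$; this is true and easy, but it is worth a one-line justification since the proposition is stated for ranked atomic lattices, not only for geometric ones. Second, the observation you flag as ``crucial''---that $H$ may overlap $S$ in Proposition~\ref{basis}---is indeed what makes $\chi^{arr}(S)=\chi^{arr}(S\cup\{\hat 1\})$ come out, and it is good that you pointed this out explicitly; the pairing trick that follows is the cleanest way to extract the global factor $1/\alpha(\hat 1)$.
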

Finding explicit expressions like this is difficult for posets (matroids) of rank greater then $3$.
\subsection{Taylor series}
Let $M$ be a matroid. By analogy with the topological zeta function for hyperplane arrangements we set
$$\alpha:\L\setminus\{\hat 0\}\ni u\mapsto \abs{A(F(M))_{\leq u}}s+rk_{F(M)}(u).$$
It follows from \cite{DL} that for $\mathbb{C}$-realisable matroids $Z_{F(M)}^{\chi^{arr},\alpha}(0)=1$. We did not find any counter example to this corresponding statement for arbitrary matroids, but we are also not able to prove that this holds for all matroids. Thus this is a potential $\mathbb{C}$-realisability test for matroids:
\begin{question}Is $Z_{F(M)}^{\chi^{arr},\alpha}(0)=1$ for all matroids?
\end{question}
Based on computations we also suspect the following:
\begin{question}Is $\frac{dZ_{F(M)}^{\chi^{arr},\alpha}}{ds}(0)=\pm\abs{M}$ for all matroids?
\end{question}
Note that this statement is not known, nor had it been conjectured, for hyperplane arrangements. 
\subsection{Computations}
Computations of zeta functions of many matroids are published on \cite{mathWebsite}.
\bibliographystyle{acm}
\bibliography{allpapers}

\end{document}